\documentclass[11pt,reqno]{amsart}
\usepackage{amsthm,amssymb,amsfonts}
\usepackage[nosumlimits]{mathtools}
\usepackage{mathrsfs}
\usepackage{graphicx}
\usepackage{accents}
\usepackage{algorithm}
\usepackage{algpseudocode}
\usepackage{tikz-cd}
\usepackage{enumerate}
\usepackage{quiver}
\usepackage{stmaryrd}
\usepackage{enumitem}
\usepackage{subcaption}
\usepackage{adjustbox}
\usepackage[margin=1in]{geometry}

\usepackage{hyperref,xcolor}
\hypersetup{
    colorlinks,
    linkcolor={red!50!black},
    citecolor={blue!50!black},
    urlcolor={blue!80!black}
}

\usepackage{thmtools}
\usepackage{thm-restate}

\let\oldlhd\unlhd
\renewcommand*{\unlhd}{\mathrel{\mskip0.1mu \oldlhd \mskip0.1mu}}

\theoremstyle{plain}
\newtheorem{theorem}{Theorem}[section]

\newtheorem{lemma}[theorem]{Lemma}

\theoremstyle{definition}
\newtheorem{definition}[theorem]{Definition}

\newtheorem{question}[theorem]{Question}

\DeclareMathOperator{\AR}{AR}
\DeclareMathOperator{\GR}{GR}

\DeclareMathOperator{\codim}{codim}

\DeclareMathOperator{\str}{str}

\DeclareMathOperator{\rank}{rank}

\newcommand{\F}{\mathbb F}
\newcommand{\sr}{\operatorname{sr}}

\begin{document}
\title{Lower bounds on the strength of the determinant}
\date{}
\author[Q-Y.~Chen]{Qiyuan~Chen}
\address{State Key Laboratory of Mathematical Sciences, Academy of Mathematics and Systems Science, Chinese Academy of Sciences, Beijing 100190, China}
\email{chenqiyuan@amss.ac.cn}
\author[Y-H.~Zhao]{Yuhao Zhao}
\address{School of Mathematical Sciences, University of Science and Technology of China, Hefei 230026, China}
\email{zhaoyh21@mail.ustc.edu.cn}
\begin{abstract}
We establish new lower bounds for the strength and partition rank of the determinant.  For every prime $p$, we prove the exact identity
\[
\operatorname{str}(\mathrm{det}_p)=p.
\]
A weak monotonicity argument, combined with a bound for gaps between consecutive primes, then gives $\operatorname{str}(\mathrm{det}_n)\ge (1-o(1))n^{0.475}$ for sufficiently large $n$.  Since the Birch rank of $\mathrm{det}_n$ is always $4$, this gives the first explicit family showing that the dependence on the degree in bounds for strength in terms of Birch rank is unavoidable.  Viewing $\mathrm{det}_n$ as an $n$-linear form in its columns, we also prove that its partition rank is at least the largest prime not exceeding $n$.  Consequently,
\[
n-n^{0.525}\le \operatorname{prk}(\mathrm{det}_n)\le n
\]
for all sufficiently large $n$, and hence the partition rank of the determinant is $n-o(n)$.  The proof introduces an intersection-theoretic method for lower-bounding strength: a short strength decomposition produces a nowhere-vanishing section of a split vector bundle on the complement of the determinantal hypersurface, while a nonzero top Chern class in the Chow ring of $\mathrm{PGL}_n$ obstructs such a section.
\end{abstract}
\maketitle


\section{Introduction}

The strength of a homogeneous polynomial and the partition rank of a multilinear form measure the shortest way of writing the given object as a sum of products of lower-degree objects.  These notions arise in commutative algebra \cite{ananyan2020small}, analytic number theory \cite{schmidt1985density}, additive combinatorics \cite{gowers2011linear}, and the polynomial method \cite{Naslund20,tao2016slice}. In this paper, we study them for one of the most classical explicit polynomials, the determinant. Besides obtaining nearly sharp lower bounds, we develop a new intersection-theoretic mechanism for certifying that a proposed decomposition cannot exist, which is of independent interest.

Our first motivation comes from the relation between strength and the codimension of singularities.  Ananyan and Hochster proved in \cite{ananyan2020small} that, for a form of fixed degree $d$, sufficiently large strength forces the singular locus to have a large codimension. This codimension is often referred to the Birch rank of the form.  Equivalently, the strength of a degree-$d$ form is bounded in terms of its Birch rank and $d$.  A substantial body of later work sought effective and quantitatively stronger versions of this principle; see, among others, \cite{adiprasito2021schmidt,cohen2021structure,cohen2023partition,kazhdan2024schmidt}.  The strongest explicit estimate relevant here is due to Kazhdan, Lampert, and Polishchuk: under the assumption that the characteristic does not divide $d(d-1)$, their Schmidt rank is at most $(d-1)$ times the codimension of the singular locus; see \cite[Theorem~A.2]{kazhdan2024schmidt}. More recently, several authors established bounds of strength (resp. partition rank) in the Birch rank (resp. geometric rank/analytic rank) over broad classes of fields, with a constant depending on the degree (sometimes on the field) \cite{moshkovitz2022quasi,baily2024strength,chenye2026geometry}.

Before the present work, however, no explicit family showed that the dependence on $d$ in such a comparison is genuinely necessary. All bounds obtained up to now depend on $d$ as a linear factor.  It is important here not to confuse this question with the necessity of a degree parameter in Stillman's conjecture itself.  The classical examples showing that Stillman-type bounds must depend on the degrees concern projective dimensions of ideals.  Combining those examples with the Ananyan--Hochster regularization argument does not produce a family of individual forms having bounded Birch rank and unbounded strength: the mechanism used to prove Stillman's conjecture (or to obtain a small subalgebra) is not a converse that turns large projective dimension into such a rank separation.  Our determinant examples give the first direct obstruction result.  Indeed, for every prime $p$ we prove that the strength of $\det_p$ has the exact value
\[
\str(\mathrm{det}_p)=p,
\]
whereas the Birch rank of $\mathrm{det}_p$ is equal to $4$. Thus, any universal inequality of the form
\[
\str(f)\le C(d)\operatorname{Brk}(f)
\]
must satisfy $C(p)\ge p/4$ for infinitely many degrees $p$.  In particular, a linear dependence on the degree is unavoidable up to an absolute constant.

Our second motivation is combinatorial.  Partition rank is one of the central algebraic quantities underlying the polynomial method and its tensor formulations \cite{naslund2017upper,Naslund20}.  Yet proving a strong lower bound for the partition rank of a general tensor is notoriously difficult.  Lampert and Moshkovitz \cite{lampert2026determinant} recently initiated a systematic study of the slice rank and partition rank of the determinant from this viewpoint.  They proved the logarithmic lower bound
\[
\operatorname{prk}(\mathrm{det}_n)\ge \log_2 n+1
\]
for the partition rank of the determinant, and explained why the previously available approaches could not improve it beyond logarithmic order. Moreover, they asked for the asymptotic behavior of $\operatorname{prk}(\mathrm{det}_n)$.  They also conjectured that $\str(\mathrm{det}_n)$ tends to infinity when $n$ tends to infinity.

The determinant is particularly significant for the comparison between algebraic structure and analytic or geometric randomness.  Over finite fields, partition rank is bounded below by analytic rank, and the reverse comparison, with a constant depending only on the tensor order, is a central theme in additive combinatorics; there are several works on this theme \cite{milicevic2019polynomial,janzer2019polynomial,cohen2023partition,moshkovitz2022quasi}.  Geometric rank, introduced by Kopparty, Moshkovitz and Zuiddam in \cite{kopparty2020geometric} is the algebro-geometric analogue of analytic rank; recent work \cite{chen2024stability,moshkovitz2024uniform,baily2024strength} shows that analytic rank and geometric rank are equivalent up to constants depending only on the tensor order, uniformly in the finite field.  Lampert and Moshkovitz \cite{lampert2026determinant} formalized the possible dependence on the order by considering
\[
A(d):=\sup_T\frac{\operatorname{prk}(T)}{\lceil\AR(T)\rceil},
\qquad
G(d):=\sup_T\frac{\operatorname{prk}(T)}{\GR(T)},
\]
and asked whether either quantity grows faster than $\log d$.

We now describe our main results towards the above questions.  Throughout, for $n\ge 2$ let $p(n)$ denote the largest prime not exceeding $n$.  First, for every prime $p$ we determine the strength exactly:
\[
\str(\mathrm{det}_p)=p.
\]
For arbitrary $n$, a weak monotonicity property for determinant strength, together with the prime-gap estimate recalled below, yields
\[
\str(\mathrm{det}_n)\ge \frac{p(n)}{n-p(n)+1}\ge (1-o(1))n^{0.475}
\]
for all sufficiently large $n$.  Since $\operatorname{Brk}(\mathrm{det}_n)=4$, this produces the degree-dependent separation discussed above and, in particular, confirms the conjecture \cite[Conjecture~2]{lampert2026determinant} of Lampert and Moshkovitz that the strength of the determinant tends to infinity.

For partition rank we prove the stronger monotonicity needed to obtain
\[
\operatorname{prk}(\mathrm{det}_n)\ge p(n).
\]
Consequently, for all sufficiently large $n$,
\[
n-n^{0.525}\le \operatorname{prk}(\mathrm{det}_n)\le n,
\]
where the upper bound is from the Laplace expansion.  Thus, we have
\[
\operatorname{prk}(\mathrm{det}_n)=n-o(n),
\]
which determines its asymptotic value and answers \cite[Problem~1]{lampert2026determinant}.  Moreover, since $\GR(\mathrm{det}_d)=2$, and over every finite field one has $\lceil\AR(\mathrm{det}_d)\rceil=2$ \cite{lampert2026determinant}, it follows that
\[
A(d),G(d)\ge \frac{p(d)}{2}=\frac d2-o(d).
\]
This gives a positive answer, in a substantially stronger form, to their problem \cite[Problem~3]{lampert2026determinant} asking for a super-logarithmic separation between partition rank and analytic or geometric rank.

The main new ingredient in our proof is an intersection-theoretic interpretation of strength lower bounds.  A decomposition placing a form $f$ in an ideal generated by $r$ lower-degree forms gives a global section of a direct sum of line bundles on
\[
\mathbb P(V)\setminus V_+(f)
\]
that is nowhere zero.  A nonvanishing top Chern class therefore rules out such a decomposition.  For $f=\mathrm{det}_p$, the complement is $\mathrm{PGL}_p$, and the relevant Chow ring is
\[
CH^*(\mathrm{PGL}_p)\cong \mathbb Z[x]/(px,x^p).
\]
If a strength decomposition with at most $p-1$ summands existed, its associated split bundle would have top Chern class
\[
\left(\prod_{i=1}^{p-1}d_i\right)x^{p-1},
\qquad 1\le d_i\le p-1,
\]
which is nonzero in this ring and leads to a contradiction.  This use of intersection theory to detect the impossibility of a nowhere-vanishing section provides a new tool for explicit lower bounds, distinct from the restriction, derivative, and combinatorial methods previously used for lower bounding the partition rank.

The rest of this paper is organized as follows.  In Section~\ref{sec:pre}, we first review the rank notions and the necessary materials on vector bundles, Chow groups, and Chern classes.  We then compute the part of the Chow ring of $\mathrm{PGL}_n$ needed for the obstruction in Section~\ref{sec:intersection-PGL}.  Next, we prove the exact result for prime sizes and derive a polynomial lower bound for general determinant strength using Hasse derivatives and prime gaps in Section~\ref{sec:exact}.  In Section~\ref{sec:monotonicity}, we establish the monotonicity of the partition rank of the determinant and deduce its asymptotic value. In Section~\ref{sec:str-vs-par}, we further discuss the relationship between strength and partition rank. Section~\ref{sec:discussion} contains some final discussions and open problems.

\section{Preliminaries and notation}\label{sec:pre}
Throughout this paper, unless otherwise stated, $K$ denotes an algebraically closed field.  All vector spaces are finite-dimensional over $K$.  For an integer $d\geq 1$, we write $[d]=\{1,\ldots,d\}$.  If $V$ is a vector space, then $V^*$ denotes its dual.  For a polynomial ring $S=K[x_1,\ldots,x_N]$, the homogeneous degree-$e$ part of $S$ is denoted by $S_e$.
\subsection{Rank notions for tensors and homogeneous polynomials}
We first recall several rank notions of tensors and forms that will be used later.
In this part, let us briefly introduce the definitions of partition rank and geometric rank of a tensor, and the definitions of strength and Birch rank of a homogeneous polynomial. 
\begin{definition}
Let $d\ge 2$, and let
\[
  T\in V_1^*\otimes\cdots\otimes V_d^*
\]
be a $d$-linear form.  A tensor $T$ has \emph{partition rank one} if there is a nonempty proper subset $I\subsetneq [d]$ and multilinear forms
\[
  A\in \bigotimes_{i\in I}V_i^*,
  \qquad
  B\in \bigotimes_{i\notin I}V_i^*
\]
such that
\[
  T(v_1,\ldots,v_d)=A((v_i)_{i\in I})B((v_i)_{i\notin I}).
\]
The \emph{partition rank} of $T$, denoted $\operatorname{prk}_K(T)$, is the smallest integer $r$ such that $T$ is a sum of $r$ partition-rank-one tensors. 
The \textit{slice rank} $\sr_{K}(T)$ of $T$ is the least $r$ for which $T(v_1,\ldots,v_d)=\sum_{a=1}^{r}A(v_{i_a})B((v_i)_{i\ne i_a})$, where $i_a\in [d]$, $A\in V_{i_a}^*$, and $B\in\bigotimes_{i\ne i_a}V_i^*$.

Following the standard convention in the tensor-rank literature, we view $T$ as a multilinear map
\[
  V_1\times\cdots\times V_{d-1}\longrightarrow V_d^*,
  \qquad
  (v_1,\ldots,v_{d-1})\mapsto T(v_1,\ldots,v_{d-1},-).
\]
Define the associated multilinear variety
\[
  Z_T:=\{(v_1,\ldots,v_{d-1})\in V_1\times\cdots\times V_{d-1}:
  T(v_1,\ldots,v_{d-1},-)=0\in V_d^*\}.
\]
The \emph{geometric rank} of $T$ is
\[
  \operatorname{GR}_K(T):=
  \operatorname{codim}_{V_1\times\cdots\times V_{d-1}} Z_T.
\]
\end{definition}
\begin{definition}
    Let $f\in S_d$ be a homogeneous polynomial of degree $d\geq 2$.  The \emph{strength} of $f$ over $K$, denoted $\operatorname{str}_K(f)$, is the smallest integer $r$ for which one can write
\[
  f=\sum_{i=1}^r g_i h_i,
\]
where $g_i,h_i\in S$ are homogeneous polynomials satisfying
\[
  0<\deg g_i,\deg h_i<d.
\]
The \textit{polynomial slice rank}
$\sr_K^{\rm pol}(f)$ of $f$ is the least $r$ such that $f=\sum_{i=1}^r \ell_i g_i$, where $\ell_i, g_i\in S$ are homogeneous polynomials with
$\deg \ell_i=1$ and $\deg g_i=d-1$.

The \emph{Birch rank} of $f$ is the codimension of its affine singular locus:
\[
  \operatorname{Brk}_K(f):=
  \operatorname{codim}_{\mathbb A_K^N}
  V\left(\frac{\partial f}{\partial x_1},\ldots,\frac{\partial f}{\partial x_N}\right).
\]
Here $V(\cdot)$ denotes the affine common zero locus.
\end{definition}
When the ground field is fixed or clear from context, we omit the subscript \(K\) from these ranks.

\subsection{Preliminaries on algebraic geometry}
In this part, we recall some elementary notation and constructions from algebraic geometry.  A standard reference is  \cite[Chapters I--II]{hartshorne2013algebraic}.
\subsubsection{Projective varieties.}
 If $S=K[x_0,\ldots,x_N]$ is graded and $I\subset S$ is a homogeneous ideal, then
\[
  V_+(I)=\{[a_0:\cdots:a_N]\in \mathbb P_K^N: f(a_0,\ldots,a_N)=0\text{ for all homogeneous }f\in I\}
\]
is the corresponding projective zero locus.  For a homogeneous element $f\in S$, the standard principal open subset of $\operatorname{Proj}S$ is denoted by
\[
  D_+(f):=\{\mathfrak p\in \operatorname{Proj}S: f\notin \mathfrak p\}.
\]
\subsubsection{Sheaves on a variety}
A variety is a topological space together with a sheaf of rings, called the structure sheaf, which records its regular functions. Given a variety $X$, we often use $\mathcal{O}_X$ to denote its structure sheaf. 

For a projective variety $X=\operatorname{Proj}R$, where $R$ is a finitely generated graded $K$-algebra, one has
\[
  \mathcal O_X(D_+(f))=R_{(f),0}.
\]

An $\mathcal O_X$-module is a sheaf $\mathcal F$ such that each $\mathcal F(U)$ is a module over $\mathcal O_X(U)$, compatibly with restrictions. A locally free sheaf of finite rank is equivalently an algebraic vector
bundle on \(X\). Under this correspondence, invertible sheaves are the same
as line bundles. In this paper we will freely move between the two languages.

The isomorphism classes of line bundles on a variety $X$ form a group (the multiplication is defined by the tensor product of line bundles), which is called the Picard group of $X$ and is often denoted by $\mathrm{Pic}(X)$. 

A special family of line bundles over a projective variety is Serre's twisting sheaf. 
Given a graded ring $R$, if $X=\operatorname{Proj}R$, then $\mathcal O_X(m)$ is the sheaf associated with the shifted graded module $R(m)$. A homogeneous polynomial $F\in K[x_0,
\ldots,x_N]_m$ of degree $m$ defines a global section of $\mathcal O_{\mathbb P^N}(m)$.  On the affine chart $D_+(x_i)$, this section is represented by the ordinary regular function \(\frac{F}{x_i^m}\) in the coordinates $x_j/x_i$.  Thus a section of $\mathcal O(m)$ should be thought of as a homogeneous degree-$m$ function. 

Finally, we illustrate  the meaning of the zero locus of a global section of a line bundle. If $L$ is a line bundle on a variety $X$ and $s\in H^0(X,L)$ is a global section, then the zero locus of $s$ is defined locally as follows.  On an open set $U$ where $L$ is a trivial $\mathcal{O}_{X}$-module of rank 1 with basis $e$, write
\[
  s|_U=f_U e,
  \qquad f_U\in \mathcal {O}_X(U).
\]
Then the zero locus of $s$ restricted to $U$ is the ordinary zero locus of the regular function $f_U$.  The local definitions can be glued by local chart.  This also illustrates the meaning of the common zero locus of a section of a vector bundle.
\subsubsection{Complete flag variety and tautological bundle}
\begin{definition}[Complete flag variety]
    A \emph{complete flag} in $V$ is a strictly increasing chain of subspaces
    \[
    V_\bullet:\quad 0 = V_0 \subset V_1 \subset V_2 \subset \cdots \subset V_n = V,
    \]
    satisfying $\dim V_i = i$ for all $1\leq i\leq n$.
    
    The collection of all such complete flags admits a natural structure of a smooth projective algebraic variety, called the \emph{complete flag variety} of $V$, denoted by $\operatorname{Fl}(V)$ or $\operatorname{Fl}(n)$.
    
    As a homogeneous space of the general linear group, the flag variety admits the presentation
    \[
    \operatorname{Fl}(V) \cong \operatorname{GL}(V) / B,
    \]
    where $B \subset \operatorname{GL}(V)$ is the Borel subgroup of upper triangular matrices preserving a fixed standard flag.
\end{definition}

\begin{definition}[Tautological subbundles on the flag variety]
    On the complete flag variety $\operatorname{Fl}(V)$, there is a canonical family of algebraic vector bundles $\{S_i\}_{i=1}^n$, called the \emph{tautological subbundles}.
    
    For each index $i \in \{1,2,\dots,n\}$, $S_i$ is a vector bundle of rank $i$, whose fiber at any point $V_\bullet = (V_1\subset\cdots\subset V_n) \in \operatorname{Fl}(V)$ is exactly the $i$-th subspace in the flag:
    \[
    \left. S_i \right|_{V_\bullet} = V_i.
    \]
    
    By the inclusion relations of the flag, these subbundles form an increasing chain
    \[
    0 = S_0 \subset S_1 \subset S_2 \subset \cdots \subset S_n = \underline{V},
    \]
    called the \emph{tautological flag}, where $\underline{V} := V \times \operatorname{Fl}(V)$ is the trivial rank-$n$ vector bundle over $\operatorname{Fl}(V)$.
    
    Furthermore, for each $1\leq i\leq n$, the successive quotient
    \[
    L_i := S_i \big/ S_{i-1}
    \]
    is a line bundle on $\operatorname{Fl}(V)$, called a \emph{tautological quotient line bundle}.
\end{definition}
\subsection{Preliminaries on intersection theory}
We next recall part of intersection theory that will be used later.  The standard reference is \cite{fulton2013intersection} or \cite{eisenbud20163264}.

Let $X$ be an algebraic variety over an algebraically closed field.  The group $CH_i(X)$ is the abelian group generated by $i$-dimensional irreducible subvarieties of $X$, modulo rational equivalence.  If $X$ is smooth and of pure dimension $n$, we write
\[
  CH^r(X):=CH_{n-r}(X).
\]
In this case, intersection products make
\[
  CH^*(X)=\bigoplus_{r\geq 0}CH^r(X)
\]
into a graded ring, called the Chow ring.  The grading is by codimension.

Under the assumption that $X$ is smooth, there is a canonical isomorphism from $\mathrm{Pic}(X)$ to $CH^{1}(X)$. It is noticeable to mention that under this assumption, $CH^{1}(X)$ is isomorphic to the group of Cartier divisors of $X$ canonically.
\begin{lemma}[Localization sequence]\label{lem-localization sequence}
Let \( X \) be an algebraic variety, \( Z \subset X \) a closed subvariety, and \( U = X \setminus Z \) the open complement. Denote by \( i \colon Z \hookrightarrow X \) the closed immersion and \( j \colon U \hookrightarrow X \) the open immersion.

Then for every integer \( k \), there exists a right-exact localization sequence of Chow groups:
\[
CH_k(Z) \xrightarrow{i_*} CH_k(X) \xrightarrow{j^*} CH_k(U) \longrightarrow 0,
\]
where:
\begin{itemize}
    \item \( i_* \) is the pushforward induced by the closed immersion \( i \);
    \item \( j^* \) is the pullback induced by the flat open immersion \( j \).
\end{itemize}
\end{lemma}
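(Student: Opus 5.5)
The statement is the standard localization sequence (Fulton, \emph{Intersection Theory}, Prop.~1.8). I would prove it by working first at the level of cycle groups and then passing to rational equivalence. Write $Z_k(\cdot)$ for the free abelian group on $k$-dimensional irreducible subvarieties. Define $i_*\colon Z_k(Z)\to Z_k(X)$ by $[V]\mapsto [V]$, regarding a subvariety of $Z$ as a subvariety of $X$. Define $j^*\colon Z_k(X)\to Z_k(U)$ by $[V]\mapsto [V\cap U]$, with the convention that this is $0$ when $V\subset Z$. Since $U$ is open, $V\cap U$ is either empty or dense in $V$, so it has dimension $k$.

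\emph{Exactness on cycles.} The sequence
\[
0\to Z_k(Z)\to Z_k(X)\to Z_k(U)\to 0
\]
is exact. For surjectivity, an irreducible $W\subset U$ of dimension $k$ has closure $\overline W\subset X$ with $\overline W\cap U=W$. For exactness in the middle, a cycle $\sum n_V[V]$ maps to zero exactly when every $V$ with $n_V\neq 0$ misses $U$, i.e.\ $V\subset Z$. This uses the fact that distinct subvarieties meeting $U$ have distinct intersections with $U$, because each is the closure of its intersection.

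\emph{Compatibility with rational equivalence.} First, $i_*$ is well defined on Chow groups. A $(k+1)$-dimensional $W\subset Z$ with $r\in K(W)^*$ gives the same divisor $\operatorname{div}(r)$ whether $W$ is viewed in $Z$ or in $X$.

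Next, $j^*$ is well defined. For a $(k+1)$-dimensional $W\subset X$ and $r\in K(W)^*$, we need
\[
j^*\operatorname{div}_W(r)=\operatorname{div}_{W\cap U}(r)
\]
(both sides zero if $W\subset Z$). This holds because, for a codimension-one $V\subset W$ meeting $U$, the order $\operatorname{ord}_V(r)$ is computed in the local ring $\mathcal O_{W,V}=\mathcal O_{W\cap U,V\cap U}$. Components $V$ contained in $Z$ are killed by $j^*$. I expect this locality check to be the main point requiring care, though it is routine.

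Surjectivity of $j^*$ on $CH_k$ then follows from surjectivity on cycles.

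\emph{Exactness in the middle on Chow groups.} The inclusion $j^*i_*=0$ is clear. Conversely, suppose $\alpha\in Z_k(X)$ and $j^*\alpha=\sum_i \operatorname{div}_{W_i}(r_i)$ with $W_i\subset U$ of dimension $k+1$ and $r_i\in K(W_i)^*$. Since $K(\overline{W_i})=K(W_i)$, we may view each $r_i$ as a rational function on $\overline{W_i}$ and set
\[
\beta=\alpha-\sum_i\operatorname{div}_{\overline{W_i}}(r_i).
\]
Then $\beta$ is rationally equivalent to $\alpha$, and by the compatibility just proved, $j^*\beta=0$ in $Z_k(U)$. Cycle-level exactness then gives $\beta=i_*\gamma$ for some $\gamma\in Z_k(Z)$, so $[\alpha]=i_*[\gamma]$ in $CH_k(X)$. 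This completes the proof.
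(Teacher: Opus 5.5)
Your argument is correct and complete. It is the standard proof of Fulton's \emph{Intersection Theory}, Proposition~1.8: split $Z_k(X)=Z_k(Z)\oplus Z_k(U)$ on cycles, check that $j^*$ commutes with $\operatorname{div}$ via $\mathcal O_{W,V}=\mathcal O_{W\cap U,V\cap U}$, and lift the $r_i$ to the closures $\overline{W_i}$. The paper gives no proof of this lemma and simply cites that standard reference (Fulton, or Eisenbud--Harris), so your route is exactly the one the paper implicitly relies on.
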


\begin{definition}[Algebraic definition of Chern class of a vector bundle]\label{def-Chern class}
    Let $E$ be a vector bundle of rank $r$ on a smooth variety $X$.  Its Chern classes are elements
\[
  c_i(E)\in CH^i(X),
  \qquad i\ge 0,
\] while its total Chern class
\[
c(E)=\sum_{i=0}^{r}c_{i}(E)
\]
with $c_0(E)=1$, which have the following properties:
\begin{enumerate}
    \item For any regular map $f:Y\to X$, $c_{i}(f^{*}(E))=f^{*}(c_{i}(E))$.
    \item For any vector bundle $E_1,E_2,E_3$ over $X$ such that \[\begin{tikzcd}
	0 & {E_1} & {E_2} & {E_3} & 0
	\arrow[from=1-1, to=1-2]
	\arrow[from=1-2, to=1-3]
	\arrow[from=1-3, to=1-4]
	\arrow[from=1-4, to=1-5]
\end{tikzcd}\] is exact , we have $c(E_2)=c(E_1)\cdot c(E_3)$.
    \item For any line bundle $L$, $c_{1}(L)=\phi(L)\in CH^{1}(X)$, where $\phi$ denotes the canonical isomorphism from $\mathrm{Pic}(X)$ to $CH^{1}(X)$.
    \item If a vector bundle $F$ has rank $s$, then $c_{i}(F)=0$ for all $i>s$.
\end{enumerate}
\end{definition}
There are two useful computational properties that are straightforward from the definition.
\begin{lemma}\label{lemma-Computational lemma of Chern class}
    \begin{enumerate}
        \item Let $E_{i}$ be line bundles of $X$ ($1\le i\le s$). Then we have the top Chern class  $c_{s}(\bigoplus_{i=1}^{s}E_i)=\prod_{i=1}^{s}c_{1}(E_i)$.
        \item Let $L$ be a line bundle of $X$. Then $c_{1}(L^{\otimes d})=d\cdot c_{1}(L)$.
    \end{enumerate}     
\end{lemma}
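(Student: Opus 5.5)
The statement to prove is Lemma~\ref{lemma-Computational lemma of Chern class}. Both parts should follow directly from the axioms in Definition~\ref{def-Chern class}: the Whitney formula (2), the normalization (3), and the vanishing property (4).

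For part (1), I will induct on $s$. The case $s=1$ is immediate. For $s\ge 2$, put $F=\bigoplus_{i=1}^{s-1}E_i$. There is a split exact sequence
\[
0\to F\to \bigoplus_{i=1}^{s}E_i\to E_s\to 0,
\]
so property (2) gives
\[
c\Bigl(\bigoplus_{i=1}^{s}E_i\Bigr)=c(F)\,c(E_s).
\]
Iterating, the total Chern class is $\prod_{i=1}^{s}c(E_i)$. By property (4), each line bundle has $c(E_i)=1+c_1(E_i)$. Expanding the product and taking the degree-$s$ part (the codimension-$s$ component of $CH^*(X)$), the only surviving term is $\prod_{i=1}^{s} c_1(E_i)$, because each factor contributes degree at most $1$. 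This is exactly $c_s$ of the direct sum. Since the grading of $CH^*(X)$ is respected by intersection product, extracting the homogeneous component is legitimate.

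For part (2), I use property (3). The map $\phi\colon \mathrm{Pic}(X)\to CH^1(X)$ is a group isomorphism, and the group law on $\mathrm{Pic}(X)$ is the tensor product. Hence
\[
c_1(L^{\otimes d})=\phi(L^{\otimes d})=d\,\phi(L)=d\,c_1(L).
\]
This holds for all integers $d$, with negative powers read as powers of the dual.

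Neither step is a real obstacle. The only points that need care are, in part (1), that the total Chern class of a line bundle truncates after degree $1$, which is axiom (4), and, in part (2), the identification of the tensor-product group law on $\mathrm{Pic}(X)$ with addition in $CH^1(X)$ under $\phi$.
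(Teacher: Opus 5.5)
Your proof is correct and follows the paper's route. Part (1) comes from the Whitney formula (2) applied to the split exact sequences, and part (2) from the normalization (3) together with $\phi\colon\mathrm{Pic}(X)\to CH^1(X)$ being a group isomorphism; you are simply more explicit than the paper, in particular in invoking axiom (4) to truncate $c(E_i)=1+c_1(E_i)$ before extracting the degree-$s$ component, a step the paper leaves implicit.
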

\begin{proof}
    The first claim follows directly from (2) in Definition~\ref{def-Chern class}. The second claim follows directly from (3) in Definition~\ref{def-Chern class} and the fact that $\phi$ is a group isomorphism from $\mathrm{Pic}(X)$ to $CH^{1}(X)$.
\end{proof}
\begin{lemma}\label{nonvanishing lemma}
    If $E$ is a rank-$r$ vector bundle of $X$ and $E$ has a global section $S$ that is nowhere zero on $X$, then $c_{r}(E)=0$.
\end{lemma}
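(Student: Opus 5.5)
The plan is to use the nowhere-vanishing section to split off a trivial line subbundle and then compare total Chern classes. The section $s\in H^0(X,E)$ determines a morphism of vector bundles $\mathcal O_X\to E$, sending $1\mapsto s$. Since $s(x)\neq 0$ in the fiber $E(x)$ for every point $x\in X$, this map is injective on every fiber, so it is a subbundle inclusion. Concretely, in a local trivialization $E|_U\cong\mathcal O_U^{r}$ the section is a vector $(f_1,\dots,f_r)$ whose entries generate the unit ideal at every point. Hence the cokernel $Q:=E/\mathcal O_X$ is locally free of rank $r-1$: near each point some $f_i$ is a unit, and we may complete $s$ to a local frame by the remaining standard basis vectors. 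This gives a short exact sequence of vector bundles
\[
0\longrightarrow \mathcal O_X\longrightarrow E\longrightarrow Q\longrightarrow 0 .
\]

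Next, we apply property (2) of Definition~\ref{def-Chern class}, which gives $c(E)=c(\mathcal O_X)\,c(Q)$. By property (3), $c_1(\mathcal O_X)=\phi(\mathcal O_X)=0$, since $\mathcal O_X$ is the identity of $\mathrm{Pic}(X)$ and $\phi$ is a group isomorphism. Property (4) then gives $c_i(\mathcal O_X)=0$ for $i\ge 2$, because $\mathcal O_X$ has rank $1$. Therefore $c(\mathcal O_X)=1$ and $c(E)=c(Q)$. Comparing degree-$r$ components, we get $c_r(E)=c_r(Q)$. Since $Q$ has rank $r-1<r$, property (4) gives $c_r(Q)=0$, and hence $c_r(E)=0$.

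The only step requiring care is the first one: checking that the cokernel of $\mathcal O_X\to E$ is locally free, i.e.\ that pointwise nonvanishing of $s$ implies it generates a rank-one subbundle rather than merely a subsheaf. The local computation above handles this, since a vector of functions that is nonzero at a point has some unit coordinate in a neighborhood, and we may project away from that coordinate. The case $r=0$ does not occur, because a nowhere-zero section forces $r\ge1$.
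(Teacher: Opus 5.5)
Your proposal is correct and follows the same route as the paper: use the section to get $0\to\mathcal O_X\to E\to Q\to 0$ with $Q$ of rank $r-1$, apply the Whitney formula to get $c(E)=c(Q)$, and conclude $c_r(E)=c_r(Q)=0$ by the rank condition. You also check that the cokernel is locally free and that $c(\mathcal O_X)=1$, two points the paper uses without comment.
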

\begin{proof}
The nowhere-vanishing section \(S\) gives an exact sequence
\[
0\longrightarrow \mathcal O_X \xrightarrow{S} E\longrightarrow F\longrightarrow 0,
\]
where \(F\) is a vector bundle of rank \(r-1\). Hence
\[
c(E)=c(\mathcal O_X)c(F)=c(F).
\]
Since \(\operatorname{rank} F=r-1\), we have \(c_r(F)=0\). Therefore
\(c_r(E)=0\).
\end{proof}
\section{Intersection theory of $\mathrm{PGL}_n$}\label{sec:intersection-PGL}
In this section, we use Grothendieck's computation of the Chow ring of a
split semisimple linear algebraic group to compute the Chow ring of
\(\operatorname{PGL}_n\). The standard reference is \cite[Remark 2, p. 21]{Grothendieck1958Torsion}. We will use some standard definitions and notation from algebraic groups;
see, for example \cite{Procesi2007LieGroups} or \cite{Milne2017Algebraic}.

We first present Grothendieck's solution. This presentation is due to Mackall in \cite{Mackall2018Chow}.

Let \( G \) be a split semisimple linear algebraic group over an algebraically closed field $\mathbb{F}$, let $T \subset G$  be a split maximal torus, and let \( B \supset T \) be a Borel subgroup of \( G \). Denote by \(\mathbb{G}_m\) the multiplicative group of $\mathbb{F}$ and by  \( \mathcal{X}^*(B) = \operatorname{Hom}(B, \mathbb{G}_m) \) the character group of \( B \). Since the unipotent radical of \( B \) admits no nontrivial characters, there is a natural isomorphism \( \mathcal{X}^*(B) \cong \mathcal{X}^*(T) \).

The quotient \( G/B \) is a smooth projective variety. To every character \( \rho \in \mathcal{X}^*(B) \), one associates a homogeneous line bundle \( L(\rho) \) over \( G/B \), constructed as the quotient
\[
L(\rho) = (G \times \mathbb{A}^1) / B,
\]
where \( B \) acts on \( G \times \mathbb{A}^1 \) according to the rule
\[
b \cdot (g, y) = \left( gb,\ \rho(b)^{-1} y \right).
\]
Taking the first Chern class of these line bundles yields a group homomorphism
\[
\mathcal{X}^*(B) \to \mathrm{Pic}(G/B) = CH^1(G/B) \subset CH^*(G/B),
\]
which extends uniquely to a homomorphism of graded \( \mathbb{Z} \)-algebras
\[
c_G : S^*(\mathcal{X}^*(B)) \to CH^*(G/B).
\]
This map is called the \emph{characteristic homomorphism}.

Since Grothendieck's solution states that the Chow ring of \( G \) can be expressed as a quotient of the Chow ring of $G/B$:
\[
CH^*(G) \cong CH^*(G/B) \big/ I,
\]
where \( I \subset CH^*(G/B) \) is the homogeneous ideal generated by the image of the degree-1 component \( c_G(\mathcal{X}^*(B)) \).
Before our computation of the Chow ring of $\mathrm{PGL}_n$, we need the following well-known lemma on the Chow ring of complete flag variety $\mathrm{Fl}_n$.
\begin{lemma}\label{lem-chow ring of flag variety}
\[
CH^{*}(G/B)=CH^{*}(\mathrm{Fl}_{n})=\mathbb{Z}[x_1,\cdots,x_n]/(e_1,\cdots,e_n),
\] where $e_i$ is $i$-th elementary symmetric function of $x_1,\cdots,x_n$.

Moreover, each \( x_i \) is the first Chern class of the \( i \)-th tautological quotient line bundle on \( \mathrm{Fl}(n) \), of degree \( 1 \), and \( e_k = e_k(x_1,\dots,x_n) \) denotes the \( k \)-th elementary symmetric polynomial in \( x_1,\dots,x_n \).
\end{lemma}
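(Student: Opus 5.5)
The plan is the classical Borel-type presentation, obtained from the Chern-class calculus of Definition~\ref{def-Chern class} and a cellular decomposition of $\mathrm{Fl}_n$. The proof has three stages: produce the ring map, show it is surjective, and show it is injective.

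\textbf{Step 1: the ring map.} On $\mathrm{Fl}(V)$ the tautological flag $0=S_0\subset S_1\subset\cdots\subset S_n=\underline V$ has line-bundle quotients $L_i=S_i/S_{i-1}$, and I set $x_i=c_1(L_i)$. Applying the Whitney formula (property (2) of Definition~\ref{def-Chern class}) repeatedly along the flag gives
\[
c(\underline V)=\prod_{i=1}^n(1+x_i).
\]
The trivial bundle has total Chern class $1$, for instance by property (1) applied to the pullback from a point. Comparing graded pieces gives $e_k(x_1,\dots,x_n)=0$ for $1\le k\le n$. Hence $x_i\mapsto c_1(L_i)$ defines a graded ring map
\[
\Phi:\mathbb Z[x_1,\dots,x_n]/(e_1,\dots,e_n)\to CH^*(\mathrm{Fl}_n).
\]

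\textbf{Step 2: surjectivity.} I will realize $\mathrm{Fl}_n$ as an iterated projective bundle:
\[
\mathrm{Fl}_n=\mathbb P(Q_{n-1})\to\cdots\to\mathbb P(Q_1)\to\mathbb P(V)\to \mathrm{pt},
\]
where at each stage one chooses a line in the current universal quotient bundle. The projective bundle formula
\[
CH^*(\mathbb P(E))=CH^*(X)[\zeta]\Big/\Bigl(\textstyle\sum_j c_j(E)\zeta^{r-j}\Bigr)
\]
then shows by induction that $CH^*(\mathrm{Fl}_n)$ is generated over $\mathbb Z$ by the $x_i$. At each stage $\zeta$ is, up to sign, the class of the new tautological line. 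The same induction shows that $CH^*(\mathrm{Fl}_n)$ is free of rank $n!$, with basis the monomials $x_1^{a_1}\cdots x_n^{a_n}$ satisfying $a_i\le n-i$, up to reindexing. This matches the Bruhat cell decomposition of $GL_n/B$, which gives one free generator per cell.

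\textbf{Step 3: injectivity.} It remains to show $\Phi$ is injective, which I will do by a rank count. The ring $\mathbb Z[x]/(e_1,\dots,e_n)$ is the coinvariant algebra. It is a free $\mathbb Z$-module with the same Artin monomial basis $x^a$, $a_i\le n-i$. To see this, use the relations $h_k(x_i,\dots,x_n)=0$ for $k>n-i+1$, which are derived from the $e_j$. These rewrite any monomial into the span of the Artin monomials, so that span is everything. Since $\Phi$ is a surjection between free $\mathbb Z$-modules of the same finite rank $n!$, it is an isomorphism.

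The main obstacle is matching the relations exactly, that is, checking that the iterated projective-bundle relations generate precisely the ideal $(e_1,\dots,e_n)$ and nothing more. The rank-count argument above sidesteps this, but it requires care with signs and the dual convention ($\mathbb P$ of lines versus quotients). Finally, I will note that the $G/B$ of the preceding discussion for $\mathrm{PGL}_n$ is the same variety $GL_n/B=\mathrm{Fl}_n$, since the center lies in $B$. Under this identification the characters of $T$ correspond to the $x_i$ via the $L_i$.
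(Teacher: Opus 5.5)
Your proposal is correct in outline, apart from one misindexed relation discussed below. It is a self-contained proof of what the paper only cites. The paper's proof invokes Fulton's flag-bundle presentation $CH^*(\mathrm{Fl}(E))\cong CH^*(X)[x_1,\dots,x_n]/(e_i(x)-c_i(E))$ \cite[Lemma~5.3]{Fulton1992Flags} with $X$ a point.

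Your Steps 1 and 2 (Whitney formula along the tautological flag, then the tower of projective bundles) are the standard argument behind such presentations. The one genuinely different ingredient is Step 3, which gets injectivity from a rank count over $\mathbb Z$. That trick works only because the base is a point, so it does not give the relative statement the citation provides. It costs nothing here, and it makes the additive basis $x_1^{a_1}\cdots x_{n-1}^{a_{n-1}}$, $a_i\le n-i$, explicit.

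The ``main obstacle'' you flag is in fact resolved by your own tower. At the stage where $L_i\subset Q_{i-1}$ is chosen, the projective-bundle relation is the vanishing of the degree-$(n-i+1)$ part of $c(Q_{i-1})(1+x_i)^{-1}=\prod_{j\le i}(1+x_j)^{-1}$. That is, $h_{n-i+1}(x_1,\dots,x_i)=0$ up to sign. So the tower presents $CH^*(\mathrm{Fl}_n)$ as $\mathbb Z[x_1,\dots,x_n]/J$ with $J=\bigl(h_{n-i+1}(x_1,\dots,x_i)\bigr)_{1\le i\le n}$. Step 1 gives $(e_1,\dots,e_n)\subseteq J$, and the identity below gives $J\subseteq(e_1,\dots,e_n)$, so no rank count is needed.

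The reduction relations in Step 3 must be corrected. The product $\prod_{j=1}^n(1-x_jt)\cdot\prod_{j\le i}(1-x_jt)^{-1}=\prod_{j>i}(1-x_jt)$ has degree $n-i$ in $t$. Hence $h_k(x_1,\dots,x_i)\in(e_1,\dots,e_n)$ for $k\ge n-i+1$, or equivalently $h_k(x_i,\dots,x_n)\in(e_1,\dots,e_n)$ for $k\ge i$. The relations as you stated them would give $x_n^2=0$ at $i=n$. That is false for $n\ge 3$, since $x_n^{n-1}\neq 0$.

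With the correct relations, $h_{n-i+1}(x_1,\dots,x_i)$ has leading term $x_i^{n-i+1}$ in the lexicographic order with $x_n>\cdots>x_1$. Reduction then lands in the span of your monomials with $a_i\le n-i$.

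Only this spanning statement is needed, not freeness of the coinvariant algebra. The composite $\mathbb Z^{n!}\to\mathbb Z[x_1,\dots,x_n]/(e_1,\dots,e_n)\to CH^*(\mathrm{Fl}_n)\cong\mathbb Z^{n!}$ is a surjection between free $\mathbb Z$-modules of equal finite rank. It is therefore bijective, so both maps are isomorphisms.
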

\begin{proof}
    This is by applying \cite[Lemma~5.3]{Fulton1992Flags} to flag bundles over a single point and by choosing tautological bundles to be the universal bundles.
\end{proof}
\begin{lemma}[$\mathbb{Z}$-coefficient Chow ring of $\mathrm{PGL}_n$]\label{lem-Chow ring of PGL_n}
    $CH^*(\mathrm{PGL}_n)\cong \mathbb{Z}[x] \big/ \left( \binom{n}{1}x,\ \binom{n}{2}x^2,\ \dots,\ \binom{n}{n}x^n\right)$.
\end{lemma}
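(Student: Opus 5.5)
We apply Grothendieck's presentation $CH^*(G)\cong CH^*(G/B)/I$ with $G=\mathrm{PGL}_n$, a split semisimple group. Its flag variety is again $\mathrm{Fl}(n)$: a Borel of $\mathrm{PGL}_n$ is the image $\bar B$ of the upper triangular Borel $B\subset \mathrm{GL}_n$, and $\mathrm{PGL}_n/\bar B\cong \mathrm{GL}_n/B$. So Lemma~\ref{lem-chow ring of flag variety} gives
\[
CH^*(G/\bar B)=\mathbb Z[x_1,\dots,x_n]/(e_1,\dots,e_n).
\]
The remaining work is to identify the ideal $I$ generated by $c_G(\mathcal X^*(\bar B))$ and then simplify the quotient.

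\textbf{Step 1: the characters of $\bar B$.} We have $\mathcal X^*(\bar B)\cong\mathcal X^*(\bar T)$, where $\bar T$ is the diagonal torus of $\mathrm{GL}_n$ modulo scalars. Its characters are the characters $t\mapsto \prod t_i^{a_i}$ of the diagonal torus that are trivial on scalars, namely those with $\sum a_i=0$. This lattice is generated by $\varepsilon_i-\varepsilon_{i+1}$ for $1\le i\le n-1$.

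\textbf{Step 2: the corresponding line bundles.} We compute $c_1(L(\varepsilon_i))$ on $\mathrm{GL}_n/B$. Up to a sign convention, the homogeneous line bundle attached to $\varepsilon_i$ is the tautological quotient $L_i=S_i/S_{i-1}$ or its dual. Hence $c_G(\varepsilon_i-\varepsilon_{i+1})=\pm(x_i-x_{i+1})$, and
\[
I=(x_1-x_2,\dots,x_{n-1}-x_n).
\]
The sign convention does not affect $I$. This identification is the main point needing care: one must check that the character lattice of the $\mathrm{PGL}_n$-Borel really is the sum-zero sublattice and that the homogeneous bundles match the tautological quotients. I would do this by comparing the fiber of $(G\times\mathbb A^1)/B$ at the standard flag with $V_i/V_{i-1}$, on which $B$ acts through the $i$-th diagonal entry.

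\textbf{Step 3: the quotient.} Modulo $I$ every $x_i$ becomes a single class $x$. Each $e_k(x_1,\dots,x_n)$ then specializes to $\binom nk x^k$, because $e_k$ has $\binom nk$ monomials. Therefore
\[
CH^*(\mathrm{PGL}_n)\cong \mathbb Z[x_1,\dots,x_n]/\bigl(e_1,\dots,e_n,\;x_i-x_{i+1}\bigr)\cong \mathbb Z[x]/\Bigl(\tbinom n1x,\tbinom n2x^2,\dots,\tbinom nn x^n\Bigr),
\]
which is the claimed presentation. The grading is preserved because all generators are homogeneous. For $n=p$ prime, every $\binom pk$ with $0<k<p$ is divisible by $p$, which recovers the ring $\mathbb Z[x]/(px,x^p)$ used later.
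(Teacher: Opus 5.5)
Your proposal is correct and follows essentially the same route as the paper. Both apply Grothendieck's presentation with $G/\bar B\cong \mathrm{Fl}(n)$, identify the characters of $\bar B$ with the sum-zero lattice generated by the simple roots, send these roots to $x_i-x_{i+1}$, and specialize $e_k$ to $\binom nk x^k$. Your fiber check at the standard flag, to identify $L(\varepsilon_i)$ with the tautological quotient, is a useful justification of a step the paper simply asserts.
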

\begin{proof}
    We apply Grothendieck's solution. Let \(Z\subset \operatorname{GL}_n\) be the subgroup of scalar matrices. First, set \( D_n \subset \operatorname{GL}_n \) be the maximal torus consisting of diagonal matrices and \( B_n \subset \operatorname{GL}_n \) the Borel subgroup consisting of upper triangular invertible matrices. 
    Set $G=\operatorname{GL}_n/Z=\operatorname{PGL}_n,T=D_n/Z$, and $B=B_n/Z$. 
Then \(G/B\simeq \operatorname{GL}_n/B_n\) is the complete flag variety of rank \(n\).

    As pointed above, \( \mathcal{X}^*(B) \cong \mathcal{X}^*(T) \). By definition of the quotient \( T = D_n / \mathbb{G}_m \), the character group \( \mathcal{X}^*(T) \) consists of characters of \( D_n \) that restrict to the trivial character on the central \( \mathbb{G}_m \):
\[
\mathcal{X}^*(T) = \left\{ \chi \in \mathcal{X}^*(D_n) \mid \chi|_{\mathbb{G}_m} = 1 \right\}\subset \mathcal{X}^*(D_n).
\]

Let \( \chi_1, \dots, \chi_n \) be the standard basis of \( \mathcal{X}^*(D_n) \), where \( \chi_i(\operatorname{diag}(t_1,\dots,t_n)) = t_i \). The central embedding \( t \mapsto t \cdot I_n \) sends \( t \in \mathbb{G}_m \) to the scalar matrix \(\mathrm{Diag}( t) \). A character \( \chi = \sum_{i=1}^n a_i \chi_i \) restricts trivially to \( \mathbb{G}_m \) if and only if
\[
\sum_{i=1}^n a_i = 0.
\]

Thus, \( \mathcal{X}^*(T) \) is a free abelian group of rank \( n-1 \), generated by the simple roots
\[
\alpha_i = \chi_i - \chi_{i+1}, \quad i = 1, 2, \dots, n-1.
\]
We have 
\[
CH^*(\mathrm{Fl}(n)) \cong \mathbb{Z}[x_1, x_2, \dots, x_n] \big/ (e_1, e_2, \dots, e_n),
\]
where each \( x_i \) is the first Chern class of the \( i \)-th tautological quotient line bundle on \( \mathrm{Fl}(n) \).

Under the characteristic homomorphism \( c_{\mathrm{PGL}_n} : S^*(\mathcal{X}^*(B)) \to CH^*(\mathrm{Fl}(n)) \), each \( \alpha_i \) maps to the difference of Chern classes:
\[
c_{\mathrm{PGL}_n}(\alpha_i) = x_i - x_{i+1} \in CH^1(\mathrm{Fl}(n)).
\]
Then
\[
CH^*(\mathrm{PGL}_n) = CH^*(\mathrm{Fl}(n)) \big/ I,
\]
where \( I \) is the ideal generated by \( \{ x_i - x_{i+1} \mid i = 1, \dots, n-1 \} \).

The relations \( x_1 - x_2 = 0,\ x_2 - x_3 = 0,\ \dots,\ x_{n-1} - x_n = 0 \) force all variables to be equal in the quotient. Set
\[
x = x_1 = x_2 = \cdots = x_n.
\]
Substituting \( x_i = x \) into the elementary symmetric polynomials yields
\[
e_k(\underbrace{x, x, \dots, x}_{n\text{ times}}) = \binom{n}{k} x^k
\]
for each \( k = 1, 2, \dots, n \). The original relations \( e_k = 0 \) in \( CH^*(\mathrm{Fl}(n)) \) therefore become
\[
\binom{n}{k} x^k = 0
\]
in the quotient ring.

It follows that \( CH^*(\mathrm{PGL}_n) \) is generated by the single degree-1 element \( x \), subject to the relations above. This proves
\[
CH^*(\mathrm{PGL}_n) \cong \mathbb{Z}[x] \bigg/ \left( \binom{n}{1}x,\ \binom{n}{2}x^2,\ \dots,\ \binom{n}{n}x^n \right).
\]
\end{proof}
\begin{lemma}\label{lem-Chern class of twisting sheaf}
    Let $\mathcal{L}_{i}\coloneqq \mathcal{O}_{\mathbb{P}(\mathrm{Mat}_n)}(i)|_{\mathrm{PGL}_n}$. Then $c_{1}(\mathcal{L}_{1})$ is a generator of $CH^{1}(\mathrm{PGL}_n)$ (as an abelian group). Moreover, if $n$ is prime, $CH^{*}(\mathrm{PGL}_n)\cong\mathbb{Z}[x]/\langle nx,x^{n}\rangle$ and $c_{1}(\mathcal{L}_{1})$ is a degree-$1$ generator of $CH^{*}(\mathrm{PGL}_n)=\mathbb{Z}[x]/\langle nx,x^{n}\rangle$ as a $\mathbb{Z}$-algebra.
\end{lemma}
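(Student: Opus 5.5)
The plan is to compute $CH^1(\mathrm{PGL}_n)$ directly from the localization sequence (Lemma~\ref{lem-localization sequence}), which shows the restriction of the hyperplane class generates it. The prime case then follows from Lemma~\ref{lem-Chow ring of PGL_n} plus a simple unit argument.

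\textbf{Step 1: localization.} Write $N=n^2-1$ and identify $\mathrm{PGL}_n$ with the open set $U=\mathbb{P}(\mathrm{Mat}_n)\setminus Z$, where $Z=V_+(\det_n)$. I apply Lemma~\ref{lem-localization sequence} with $k=N-1$:
\[
CH_{N-1}(Z)\xrightarrow{i_*} CH_{N-1}(\mathbb{P}^N)\xrightarrow{j^*} CH_{N-1}(U)\to 0 .
\]
The determinant is an irreducible polynomial, so $Z$ is an irreducible hypersurface of dimension $N-1$. Hence $CH_{N-1}(Z)=\mathbb{Z}[Z]$. Its image under $i_*$ is the divisor class of a degree-$n$ hypersurface, namely $nH$, where $H=c_1(\mathcal{O}_{\mathbb{P}^N}(1))$ generates $CH^1(\mathbb{P}^N)\cong\mathbb{Z}$. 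Therefore
\[
CH^1(\mathrm{PGL}_n)=CH_{N-1}(U)\cong \mathbb{Z}/n ,
\]
and it is generated by $j^*H$.

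\textbf{Step 2: identify $j^*H$ with $c_1(\mathcal{L}_1)$.} Apply property (1) of Definition~\ref{def-Chern class} to the open immersion $j$. This gives
\[
j^*c_1(\mathcal{O}(1))=c_1(j^*\mathcal{O}(1))=c_1(\mathcal{L}_1).
\]
So $c_1(\mathcal{L}_1)$ generates $CH^1(\mathrm{PGL}_n)$ as an abelian group. This agrees with Lemma~\ref{lem-Chow ring of PGL_n}, whose degree-$1$ part is $\mathbb{Z}x/n\mathbb{Z}x$.

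\textbf{Step 3: $n=p$ prime.} Here $p\mid\binom{p}{k}$ for $1\le k\le p-1$, and $\binom{p}{p}=1$. Each relation $\binom{p}{k}x^k$ with $k<p$ is a multiple of $px$, so the ideal in Lemma~\ref{lem-Chow ring of PGL_n} is exactly $(px,x^p)$. By Step 2, $c_1(\mathcal{L}_1)=ux$ with $u$ a unit mod $p$. Choose $u'$ with $uu'=1+kp$. Then
\[
u'\,c_1(\mathcal{L}_1)=x+kpx=x .
\]
Thus $x$, and hence the whole ring, lies in the subalgebra generated by $c_1(\mathcal{L}_1)$. Moreover $x\mapsto ux$ defines an automorphism of $\mathbb{Z}[x]/(px,x^p)$, so $c_1(\mathcal{L}_1)$ is a degree-$1$ algebra generator satisfying the same presentation.

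\textbf{Main obstacle.} The only real input is the identification in Step 1: that $i_*[Z]=nH$, using irreducibility of $\det_n$ and the fact that the class of a degree-$d$ hypersurface is $dH$. I will cite these as standard facts.
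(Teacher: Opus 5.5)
Your proof is correct and follows essentially the same route as the paper. Both arguments use the localization sequence for $V_+(\det_n)\subset\mathbb{P}(\mathrm{Mat}_n)$ with $i_*[V_+(\det_n)]=n\,c_1(\mathcal{O}(1))$, pull back $c_1$ along the open immersion, and use $p\mid\binom{p}{k}$ for $1\le k\le p-1$ in the prime case. Your unit argument ($c_1(\mathcal{L}_1)=ux$ with $p\nmid u$, and $x\mapsto ux$ an automorphism of $\mathbb{Z}[x]/(px,x^p)$) just makes explicit the step the paper says ``follows directly.''
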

\begin{proof}
By Lemma~\ref{lem-Chow ring of PGL_n}, we have $CH^{1}(\mathrm{PGL}_n)=\mathbb{Z}/n\mathbb{Z}$. By Lemma~\ref{lem-localization sequence}, 
\[
CH_{n^2-2}(V_{+}(\mathrm{det}_n)) \xrightarrow{i_*} CH_{n^2-2}(\mathbb{P}(\mathrm{Mat}_n)) \xrightarrow{j^*} CH_{n^2-2}(\mathrm{PGL}_n) \longrightarrow 0.
\]
A well-known fact is that $\mathrm{det}_n$ is irreducible. Hence $CH_{n^2-2}(V_{+}(\mathrm{det}_n))\cong \mathbb{Z}$, $i_*$ maps $[V_{+}(\mathrm{det}_n)]$ to $[V_{+}(\mathrm{det}_n)]$ in $CH_{n^2-2}(\mathbb{P}(\mathrm{Mat}_n))$ and is equal to $$c_{1}(\mathcal{O}_{\mathbb{P}(\mathrm{Mat}_n)}(n))=c_{1}(\mathcal{O}_{\mathbb{P}(\mathrm{Mat}_n)}(1)^{\otimes n})=nc_{1}(\mathcal{O}_{\mathbb{P}(\mathrm{Mat}_n)}(1))$$ by Lemma~\ref{lemma-Computational lemma of Chern class}. Hence\[
CH_{n^2-2}(\mathbb{P}(\mathrm{Mat}_n))/i_{*}(CH_{n^2-2}(V_{+}(\mathrm{det}_n)))\cong \mathbb{Z}/n\mathbb{Z},
\] and the generator is $c_{1}(\mathcal{O}_{\mathbb{P}(\mathrm{Mat}_n)}(1))$. By (1) in Definition~\ref{def-Chern class} we see that \[
j^{*}(c_{1}(\mathcal{O}_{\mathbb{P}(\mathrm{Mat}_n)}(1)))=c_{1}(\mathcal{O}_{\mathbb{P}(\mathrm{Mat}_n)}(1)|_{\mathrm{PGL}_n}).
\]
This implies that $c_{1}(\mathcal{O}_{\mathbb{P}(\mathrm{Mat}_n)}(1)|_{\mathrm{PGL}_n})$ is a generator of $CH^{1}(\mathrm{PGL}_n)$.

If $n$ is a prime number, it is clear that $n|\binom{n}{i}$ when $i<n$, and hence $$CH^{*}(\mathrm{PGL}_n)\cong\mathbb{Z}[x]/\langle nx,x^{n}\rangle.$$ Since $CH^{1}(\mathrm{PGL}_n)\cong\mathbb{Z}/n\mathbb{Z}$ and $c_{1}(\mathcal{O}_{\mathbb{P}(\mathrm{Mat}_n)}(1)|_{\mathrm{PGL}_n})$ is a generator of $CH^{1}(\mathrm{PGL}_n)$, the next claim follows directly.
\end{proof}

\section{Proof of the main theorem}\label{sec:exact}
In this section, we prove lower bounds on $\mathrm{str}(\mathrm{det}_n)$. We first show the following lemma.
\begin{lemma}\label{lem-intepretion}
    Given any positive integer $d\ge 2$ and a degree-$d$ homogeneous polynomial $f$, if for any $1\le d_i\le d-1$, $1\le i\le r$, any global section of \(\bigoplus_{i=1}^{r}(\mathcal{O}_{\mathbb{P}^n}(1)|_{\mathbb{P}^n\backslash V_{+}(f)})^{\otimes d_{i}}\) has vanishing points on $\mathbb{P}^n\backslash V_{+}(f)$, then \[ 
    \mathrm{str}(f)>r.
    \]
\end{lemma}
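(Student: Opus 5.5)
We argue by contraposition: assume $\mathrm{str}(f)\le r$ and build a global section of such a split bundle on $U:=\mathbb{P}^n\setminus V_+(f)$ that vanishes nowhere on $U$. This contradicts the hypothesis, so $\mathrm{str}(f)>r$.

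\textbf{Step 1: the decomposition.} Write $f=\sum_{i=1}^{s} g_ih_i$ with $s\le r$, where $g_i,h_i$ are homogeneous and $0<\deg g_i,\deg h_i<d$. If $s<r$, pad with extra summands $g_i=h_i=0$ and $d_i=1$; these do not affect the vanishing argument, since the remaining coordinates alone will already be nowhere zero. Set $d_i:=\deg g_i\in[1,d-1]$ for $i\le s$.

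\textbf{Step 2: the section.} Each $g_i$ is a homogeneous form of degree $d_i$, so it defines a global section of $\mathcal{O}_{\mathbb{P}^n}(d_i)=\mathcal{O}_{\mathbb{P}^n}(1)^{\otimes d_i}$. Restricting to $U$ gives a section of $(\mathcal{O}_{\mathbb{P}^n}(1)|_U)^{\otimes d_i}$. Take
\[
\sigma=(g_1|_U,\dots,g_r|_U)\in H^0\Bigl(U,\bigoplus_{i=1}^r(\mathcal{O}_{\mathbb{P}^n}(1)|_U)^{\otimes d_i}\Bigr).
\]

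\textbf{Step 3: $\sigma$ vanishes nowhere on $U$.} A section of a direct sum vanishes at a point exactly when every component vanishes there. Work on a standard chart $D_+(x_j)$, where $\mathcal{O}(d_i)$ is trivialized by $x_j^{d_i}$. There the component $g_i$ becomes the regular function $g_i/x_j^{d_i}$, so its zero locus is the set of points $[a]$ with $g_i(a)=0$.

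Suppose all $g_i(a)=0$ at some $[a]\in U$. Then $f(a)=\sum_i g_i(a)h_i(a)=0$, so $[a]\in V_+(f)$, which contradicts $[a]\in U$. Hence $\sigma$ has no zeros on $U$, contradicting the hypothesis.

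\textbf{Main point of care.} The only subtle step is Step 3: we must check that vanishing of a section of $\mathcal{O}(d_i)$ on $U$ coincides with vanishing of the form $g_i$ at a representative of the point. This follows from the local description of zero loci given in the preliminaries, since a change of chart multiplies the local function by a unit.
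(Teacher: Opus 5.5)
Your proof is correct and follows the paper's argument. You take one factor $g_i$ from each summand of $f=\sum_i g_ih_i$ and use $(g_1,\dots,g_r)$ as a section of the split bundle; it has no zero on $\mathbb{P}^n\setminus V_+(f)$, because $g_i(a)=0$ for all $i$ forces $f(a)=\sum_i g_i(a)h_i(a)=0$ (which replaces the paper's unneeded appeal to the Nullstellensatz), and your explicit padding by zero components when fewer than $r$ summands occur is a detail the paper leaves implicit.
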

\begin{proof}
    According to \cite[Lemma~17.16.4]{stacks-project}, restricted map commutes with tensor product on the sheaf of modules. Hence \[
    \mathcal{O}_{\mathbb{P}^n}(e)|_{\mathbb{P}^n\backslash V_{+}(f)}=(\mathcal{O}_{\mathbb{P}^n}(1)|_{\mathbb{P}^n\backslash V_{+}(f)})^{\otimes e}.
    \]
    If $\mathrm{str}(f)\le r$, we may assume that there are forms $f_i$ with degree $1\le d_i<d$ such that $f\in \langle f_1,\cdots,f_r\rangle$. Each $f_i$ defines a global section of $\mathcal{O}_{\mathbb{P}^{n}}(d_i)$ and hence a global section of $\mathcal{O}_{\mathbb{P}^{n}}(d_i)|_{\mathbb{P}^n\backslash V_{+}(f)}$. By Hilbert Nullstellensatz, $V_{+}(f_1,\cdots,f_r)\subset V_{+}(f)$. In other words, there is a global section of $$\bigoplus_{i=1}^{r}\mathcal{O}_{\mathbb{P}^n}(d_i)|_{\mathbb{P}^n\backslash V_{+}(f)})^{\otimes d_{i}}=\bigoplus_{i=1}^{r}(\mathcal{O}_{\mathbb{P}^n}(1)|_{\mathbb{P}^n\backslash V_{+}(f)}$$ defined by $(f_1,\cdots,f_r)$ that is nowhere vanishing on $\mathbb{P}^n\backslash V_{+}(f)$. This is a contradiction.
\end{proof}
Lemma~\ref{lem-intepretion} gives a interpretation of the problem of giving a lower bound of a form into testing if there is a nowhere vanishing global section of a splitting vector bundle of $\mathbb{P}^n\backslash V_{+}(f)$. This interpretation is philosophically (not technically) similar to the interpretation given by Tevelev in \cite{tevelev2001isotropic}.
\begin{theorem}\label{main theorem-prime}
    If $p$ is a prime number, then $\mathrm{str}(\mathrm{det}_p)=p$.
\end{theorem}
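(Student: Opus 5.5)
The proof has two halves: an upper bound from the Laplace expansion and a lower bound from the Chern class obstruction of Lemma~\ref{lem-intepretion}.

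\emph{Upper bound.} Expanding $\mathrm{det}_p$ along the first row gives
\[
\mathrm{det}_p=\sum_{j=1}^{p}x_{1j}M_{1j},
\]
where $M_{1j}$ is the signed $(p-1)\times(p-1)$ cofactor. Each summand is a product of forms of degrees $1$ and $p-1$, both strictly between $0$ and $p$. Hence $\str(\mathrm{det}_p)\le p$.

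\emph{Lower bound.} We apply Lemma~\ref{lem-intepretion} with $f=\mathrm{det}_p$, $d=p$ and $r=p-1$, working in $\mathbb P(\mathrm{Mat}_p)$. The complement
\[
\mathbb P(\mathrm{Mat}_p)\setminus V_+(\mathrm{det}_p)
\]
is the set of invertible matrices modulo scalars, which is $\mathrm{PGL}_p$. Under this identification, the restriction of $\mathcal O(1)$ is the bundle $\mathcal L_1$ of Lemma~\ref{lem-Chern class of twisting sheaf}.

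Fix any degrees $1\le d_1,\dots,d_{p-1}\le p-1$ and consider the rank-$(p-1)$ bundle
\[
E=\bigoplus_{i=1}^{p-1}\mathcal L_1^{\otimes d_i}
\]
on $\mathrm{PGL}_p$. Lemma~\ref{lemma-Computational lemma of Chern class} gives
\[
c_{p-1}(E)=\prod_{i=1}^{p-1} c_1\bigl(\mathcal L_1^{\otimes d_i}\bigr)=\Bigl(\prod_{i=1}^{p-1} d_i\Bigr)x^{p-1},
\]
where $x=c_1(\mathcal L_1)$. By Lemma~\ref{lem-Chern class of twisting sheaf}, since $p$ is prime,
\[
CH^*(\mathrm{PGL}_p)\cong\mathbb Z[x]/\langle px,x^p\rangle .
\]
In degree $p-1<p$ this ring is $\mathbb Z/p\cdot x^{p-1}$, so $x^{p-1}$ generates a copy of $\mathbb Z/p$. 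Each $d_i$ lies in $[1,p-1]$, so $\prod d_i$ is not divisible by $p$. Therefore $c_{p-1}(E)\neq 0$.

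Lemma~\ref{nonvanishing lemma} then says $E$ has no nowhere-vanishing global section. This holds for every admissible choice of the $d_i$, so the hypothesis of Lemma~\ref{lem-intepretion} is satisfied, and $\str(\mathrm{det}_p)>p-1$.

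\emph{Points to check.} A decomposition with fewer than $p-1$ terms can be padded with zero summands. Alternatively, one can pad the section with extra coordinate entries: adding coordinates to a nowhere-vanishing section keeps it nowhere vanishing, so the bundle can always be taken to have rank exactly $p-1$.

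The step I expect to need the most care is verifying that $x^{p-1}\neq 0$ in $CH^{p-1}(\mathrm{PGL}_p)$. This means checking that the presentation $\mathbb Z[x]/\langle px,x^p\rangle$ really has $\mathbb Z/p$ in each degree $1\le k\le p-1$, i.e.\ that no further relations collapse it. The presentation makes this immediate: the ideal in degree $k<p$ is exactly $p\mathbb Z x^k$. One also needs the $x$ appearing in that presentation to coincide, up to a unit, with $c_1(\mathcal L_1)$. Lemma~\ref{lem-Chern class of twisting sheaf} provides exactly this identification. Finally, the lower bound uses the primality of $p$ twice: to obtain the presentation and to ensure that the degrees $d_i$ are all units mod $p$.
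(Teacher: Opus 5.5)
Your proposal is correct and follows the paper's proof essentially step for step: the Laplace expansion gives the upper bound, and for the lower bound a decomposition with at most $p-1$ terms would, via Lemma~\ref{lem-intepretion}, give a nowhere-vanishing section whose top Chern class $\bigl(\prod d_i\bigr)c_1(\mathcal L_1)^{p-1}$ is nonzero in $CH^*(\mathrm{PGL}_p)\cong\mathbb Z[x]/\langle px,x^p\rangle$, since $p\nmid\prod d_i$. Your extra remarks are also correct and make explicit details the paper leaves implicit: padding shorter decompositions, and the fact that $c_1(\mathcal L_1)$ agrees with $x$ up to a unit mod $p$.
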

\begin{proof}
    The upper bound follows from the Laplace expansion. Suppose now that $\mathrm{str}(\mathrm{det}_p)\le p-1$. By Lemma~\ref{lem-intepretion}, there is a global section of \[\bigoplus_{i=1}^{p-1}(\mathcal{O}_{\mathbb{P}(\mathrm{Mat}_p)}(1)|_{\mathbb{P}(\mathrm{Mat}_p)\backslash V_{+}(\mathrm{det}_p)})^{\otimes d_{i}}\] 
    that is nowhere vanishing on $\mathbb{P}(\mathrm{Mat}_p)\backslash V_{+}(\mathrm{det}_p)$. 
    
    From now on, we use $\mathrm{PGL}_p$ to identify $\mathbb{P}(\mathrm{Mat}_p)\backslash V_{+}(\mathrm{det}_p)$. 
    By Lemma~\ref{nonvanishing lemma}, $$c_{p-1}(\bigoplus_{i=1}^{p-1}(\mathcal{O}_{\mathbb{P}(\mathrm{Mat}_p)}(1)|_{\mathrm{PGL}_p})^{\otimes d_{i}}))=0.$$ On the other hand, by Lemma~\ref{lemma-Computational lemma of Chern class}, we have 
    \begin{align*}
        &c_{p-1}\left(\bigoplus_{i=1}^{p-1}(\mathcal{O}_{\mathbb{P}(\mathrm{Mat}_p)}(1)|_{\mathrm{PGL}_p})^{\otimes d_{i}}\right)\\
        =&\prod_{i=1}^{p-1}c_{1}\left((\mathcal{O}_{\mathbb{P}(\mathrm{Mat}_p)}(1)|_{\mathrm{PGL}_p})^{\otimes d_{i}}\right)\\
        =&\prod_{i=1}^{p-1} \left(d_{i}\cdot c_{1}(\mathcal{O}_{\mathbb{P}(\mathrm{Mat}_p)}(1)|_{\mathrm{PGL}_p})\right)\\
        =&\left(\prod_{i=1}^{p-1}d_{i}\right)\cdot c_{1}(\mathcal{O}_{\mathbb{P}(\mathrm{Mat}_p)}(1)|_{\mathrm{PGL}_p})^{p-1}.
    \end{align*}
    This is impossible by Lemma~\ref{lem-Chern class of twisting sheaf}, since $\prod_{i=1}^{p-1}d_{i}\neq 0\ (\mathrm{mod~} p)$ and $c_{1}(\mathcal{O}_{\mathbb{P}(\mathrm{Mat}_p)}(1)|_{\mathrm{PGL}_p})$ is the generator of $\mathbb{Z}[x]/\langle px,x^{p}\rangle$.
\end{proof}
It is obvious that the proof of Theorem~\ref{main theorem-prime} relies heavily on the assumption that $n$ is prime. To obtain lower bounds for general $n$, we will prove a $k$-step weak monotonicity lemma of $a_{n}\coloneqq\mathrm{str}(\mathrm{det}_n)$. To deal with field of positive characteristic, we need some notions of Hasse derivatives instead of normal derivatives, this is partly inspired by the proof of the main theorem in \cite{draisma2019topological}. 
Hasse derivatives are defined as follows: 
\begin{definition}[Hasse derivatives]
    Given a degree-$d$ polynomial $f\in\mathbb{F}[x_1,\cdots,x_m]$ and $v\in \mathbb{F}^{m}$, we write \[
f(X+tv)=\sum_{i=0}^{d}D_{v}^{i}(X)t^{i}.
\]
Then we call $D_{v}^{i}(X)$ the \textit{$i$-th $v$-directional Hasse derivative} of $f(X)$.
\end{definition}
The following is the well-known Leibniz rule for Hasse derivatives.
\begin{lemma}[Leibniz rule]\label{lem-Leibniz rule}
$D_{v}^{k}(fg)=\sum_{i=0}\limits^{k}D_{v}^{i}(f)D_{v}^{k-i}(g)$.
\end{lemma}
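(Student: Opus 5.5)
The plan is to prove the Leibniz rule by comparing coefficients of $t^k$ after expanding $(fg)(X+tv)$ in two ways. We work in the polynomial ring $\mathbb{F}[x_1,\ldots,x_m][t]$, with $t$ a new indeterminate. By the definition of Hasse derivatives applied to $f$, $g$ and $fg$,
\[
f(X+tv)=\sum_{i\ge 0}D_v^i(f)(X)\,t^i,\qquad g(X+tv)=\sum_{j\ge 0}D_v^j(g)(X)\,t^j,
\]
\[
(fg)(X+tv)=\sum_{k\ge 0}D_v^k(fg)(X)\,t^k .
\]
Here we adopt the convention that $D_v^i(h)=0$ for $i>\deg h$. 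This lets the finite sums in the definition be written as sums over all $i\ge 0$. The convention is consistent, because $h(X+tv)$ has $t$-degree at most $\deg h$.

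The key observation is that substitution $X\mapsto X+tv$ is a ring homomorphism
\[
\mathbb{F}[x_1,\ldots,x_m]\to\mathbb{F}[x_1,\ldots,x_m][t].
\]
Hence $(fg)(X+tv)=f(X+tv)\,g(X+tv)$. Multiplying the two expansions and collecting terms, the coefficient of $t^k$ on the right-hand side is
\[
\sum_{i+j=k}D_v^i(f)\,D_v^j(g)=\sum_{i=0}^{k}D_v^i(f)\,D_v^{k-i}(g).
\]

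Since $\mathbb{F}[x_1,\ldots,x_m][t]$ is a polynomial ring in $t$ over $\mathbb{F}[x_1,\ldots,x_m]$, the coefficients of $t^k$ are uniquely determined. Comparing them on the two sides gives the claimed identity. No characteristic assumption is needed: the argument never divides by factorials, and this is precisely why Hasse derivatives are used here instead of ordinary derivatives.

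The only point requiring care is that the definition was stated for a fixed degree-$d$ polynomial. If $f$ or $g$ is not homogeneous, or if degrees differ, the zero-extension convention above handles this, so there is no genuine obstacle.
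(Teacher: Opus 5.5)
Your proof is correct and complete. The paper gives no proof of this lemma, only calling it the well-known Leibniz rule, and your argument is the standard one it implicitly invokes: $X\mapsto X+tv$ is a ring homomorphism $\mathbb{F}[x_1,\ldots,x_m]\to\mathbb{F}[x_1,\ldots,x_m][t]$, and you compare coefficients of $t^k$. Your zero-extension convention $D_v^i(h)=0$ for $i>\deg h$ is indeed needed for the statement to make sense when $k$ exceeds $\deg f$ or $\deg g$.
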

Next, let us compute the $k$-th $A$-directional Hasse derivative of $\mathrm{det}_{n+k}$ for certain $A$.
\begin{lemma}\label{lem-derivition of det}
    For \[
    A=\begin{pmatrix}
        I_{k} &0_{k\times n}\\
        0_{n\times k} &0_{n\times n}
    \end{pmatrix},
    \]
    we have $D_{A}^{k}(\mathrm{det}_{n+k})=\mathrm{det}_{n}$.
\end{lemma}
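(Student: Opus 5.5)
Since $\det_{n+k}(X)$ is multilinear in the rows of $X$, the plan is to expand $\det_{n+k}(X+tA)$ row by row and read off the coefficient of $t^k$. Adding $tA$ changes only the first $k$ rows: the $i$-th row of $X+tA$ is $X_i+te_i$ for $i\le k$, where $X_i$ is the $i$-th row of $X$ and $e_i$ the $i$-th standard basis row vector. The remaining rows are $X_{k+1},\dots,X_{n+k}$. Expanding by multilinearity in the first $k$ rows gives
\[
\det\nolimits_{n+k}(X+tA)=\sum_{S\subseteq[k]} t^{|S|}\,\det\nolimits_{n+k}\bigl(X^{(S)}\bigr),
\]
where $X^{(S)}$ is obtained from $X$ by replacing row $i$ with $e_i$ for every $i\in S$. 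This is an identity of polynomials in $t$ with coefficients in $\mathbb{Z}[X]$, so it holds in any characteristic. That matters here because we work with Hasse derivatives precisely to avoid dividing by $k!$.

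The coefficient of $t^k$ comes only from $S=[k]$, and the corresponding matrix is
\[
\begin{pmatrix} I_k & 0_{k\times n}\\ * & X' \end{pmatrix},
\]
where $X'$ is the lower-right $n\times n$ block of $X$. This matrix is block lower triangular, so its determinant is $\det I_k\cdot\det X'=\det_n(X')$. Therefore $D_A^k(\det_{n+k})=\det_n$, viewed as a polynomial in the bottom-right block variables.

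No step is a real obstacle. The one point needing care is to note that the expansion uses only multilinearity over $\mathbb{Z}$, not ordinary derivatives, so the identification of the Hasse derivative is valid in every characteristic. One should also record that degree considerations give $D_A^i=0$ for $i>k$, which is consistent with the expansion, since $|S|\le k$.
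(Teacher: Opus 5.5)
Your proof is correct and is the same approach as the paper's: the paper expands $\det(X+tA)$, with top-left block $X_1+tI_k$, and reads off the coefficient of $t^k$ as $\det(X_4)$. Your row-by-row multilinear expansion, in which only $S=[k]$ contributes to $t^k$ and the resulting block lower triangular matrix has determinant $\det X_4$, spells out the step the paper calls ``clear.''
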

\begin{proof}
By expanding
\[
    \mathrm{det}(X+tA)=\mathrm{det}\begin{pmatrix}
            X_1+tI_k &X_2\\
            X_3 &X_4
        \end{pmatrix},
    \]
it is clear that the coefficient of $t^{k}$ is $\mathrm{det}(X_4)$.
\end{proof}
\begin{lemma}[$k$-step weak monotonicity]\label{k-step weak monotonicity}
    For every $n\ge 2$ and $k\in\mathbb{N}_{+}$, we have $a_{n+k}\ge \frac{1}{k+1}a_{n}$.
\end{lemma}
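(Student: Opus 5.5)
Take an optimal strength decomposition of $\mathrm{det}_{n+k}$ and apply the $k$-th $A$-directional Hasse derivative from Lemma~\ref{lem-derivition of det}, with
\[
A=\begin{pmatrix} I_k & 0\\ 0 & 0\end{pmatrix}.
\]
Write $r=a_{n+k}$ and
\[
\mathrm{det}_{n+k}=\sum_{j=1}^{r} g_j h_j, \qquad 0<\deg g_j,\deg h_j<n+k .
\]
We may also assume $g_j,h_j$ are homogeneous with $\deg g_j+\deg h_j=n+k$, discarding components of other degrees. By Lemma~\ref{lem-derivition of det} and linearity of $D_A^k$ together with the Leibniz rule (Lemma~\ref{lem-Leibniz rule}),
\[
\mathrm{det}_n(X_4)=\sum_{j=1}^{r}\sum_{i=0}^{k} D_A^{i}(g_j)\,D_A^{k-i}(h_j).
\]
Here $D_A^i(g)$ is homogeneous of degree $\deg g-i$, or zero. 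Now restrict everything to the subspace where $X_1,X_2,X_3$ are set to zero, i.e.\ substitute the block matrix with only $X_4$ nonzero. The left side stays $\mathrm{det}_n(X_4)$, and each term becomes a product of two homogeneous polynomials in the entries of $X_4$, with degrees $\deg g_j-i$ and $\deg h_j-(k-i)$ summing to $n$.

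This gives a sum of at most $r(k+1)$ products. The remaining task is to discard the terms that do not count toward strength, namely those where one factor has degree $0$ (a constant) or the degree would be negative (the term is then zero). A term with $\deg g_j-i=0$ has its other factor of degree $n$; a constant times a degree-$n$ form is not an admissible summand, so such terms need handling.

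For a fixed $j$ I would argue the degree-$0$ cases can only occur at the extreme values of $i$. Since $\deg g_j\ge1$ and $\deg h_j\ge1$, the term at $i=\deg g_j$ has first factor of degree $0$, which requires $\deg g_j\le k$. Symmetrically, the term at $k-i=\deg h_j$ has second factor of degree $0$. I would then try to absorb these bad terms. The cleanest approach is to count, for each $j$, only the indices $i$ with $0\le i\le k$, $i<\deg g_j$, and $k-i<\deg h_j$; these give genuine strength summands. The bad terms combine into $\sum c_j\,F_j$ with $F_j$ forms of degree $n$, and then we must show this leftover does not break the bound. That is the main obstacle.

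The fallback is to choose the restriction more cleverly so the constants vanish. The constant $D_A^{\deg g_j}(g_j)$ evaluated at $X_4$ (with the other blocks zero) equals $g_j$ evaluated at $A$ times a factor. So instead of restricting to $X_1=X_2=X_3=0$, I would restrict to a generic affine-linear slice $X_1=$ generic, $X_2=X_3=0$. Alternatively, I would apply $D_A^k$ and then set $X_1=sI$ for a generic scalar $s$, and extract the coefficient of an appropriate power of $s$ so the result is still homogeneous in $X_4$. The cheapest honest route, though, is to note that there are at most $k+1$ surviving terms per $j$. If a degree-$0$ factor appears, the partner $D$-term of degree $n$ can itself be decomposed as a multiple of $\mathrm{det}_n$ plus a remainder. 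Moving the $\mathrm{det}_n$ multiple to the left changes the coefficient to $1-c$. If $1-c\neq0$, we divide through and get $\mathrm{str}(\mathrm{det}_n)\le(k+1)r$. I expect making this last bookkeeping rigorous, so that the total count stays at most $(k+1)r$, to be the delicate step. The conclusion is $a_n\le (k+1)a_{n+k}$.
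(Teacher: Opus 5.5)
Your setup matches the paper up to the crucial point. Applying $D_A^k$ to an optimal decomposition and using the Leibniz rule writes $\mathrm{det}_n$ as a sum of at most $(k+1)r$ products whose degrees add to $n$. You also correctly isolate the only obstruction: terms in which one factor is the constant $D_A^{\deg g_j}(g_j)=g_j(A)$ (possible only when $\deg g_j\le k$, and symmetrically for $h_j$), multiplied by a degree-$n$ form.

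None of your remedies removes these terms, so the proposal has a genuine gap.
\begin{itemize}
\item The leftover $\sum_j c_jF_j$ consists of essentially arbitrary degree-$n$ forms $F_j=D_A^{k-\deg g_j}(h_j)$, whose strength you do not control.
\item Writing $F_j=c\,\mathrm{det}_n+R$ only moves the problem to $R$, and it leaves the case $c=1$ open.
\item Setting $X_1=sI$ and extracting a coefficient in $s$ reproduces the same identity, because the constants $g_j(A)$ do not depend on $s$.
\item A generic affine slice destroys homogeneity.
\end{itemize}
With the direction frozen at $\mathrm{diag}(I_k,0)$, there is simply no reason for $g_j(A)$ to vanish.

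The missing idea is to choose the direction to fit the given decomposition. Since $\mathrm{det}_{n+k}(PXQ)=\mathrm{det}(PQ)\,\mathrm{det}_{n+k}(X)$, and invertible linear substitutions carry strength decompositions to strength decompositions, any matrix of rank exactly $k$ can play the role of $\mathrm{diag}(I_k,0)$. So choose $A$ of rank exactly $k$ at which all of the at most $2r$ factors of degree $\le k$ vanish. Then every degree-$0$ factor is $g_j(A)=0$ or $h_j(A)=0$, and the bad terms disappear. What remains is a genuine strength decomposition of $\mathrm{det}_n$ with at most $(k+1)r$ summands.

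Such an $A$ exists by a dimension count.
\begin{itemize}
\item If $r\ge n/(k+1)$ the inequality is trivial, since $a_n\le n$ by the Laplace expansion. So assume $r<n/(k+1)$.
\item Let $\Sigma_i\subset\mathbb{P}(\mathrm{Mat}_{n+k})$ be the locus of matrices of rank at most $i$. Then $\dim\Sigma_k-\dim\Sigma_{k-1}=2n+1$.
\item Cutting $\Sigma_k$ by at most $2r<2n+1$ hypersurfaces leaves, by Krull's principal ideal theorem, a set of dimension greater than $\dim\Sigma_{k-1}$. Hence it contains a point of rank exactly $k$.
\end{itemize}
This is exactly how the paper's proof proceeds.
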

\begin{proof}
Assume that $\mathrm{str}(\mathrm{det}_{n+k})=r$. Then there exist $f_i,g_i$ ($1\le i\le r$) of degree less than $n+k$ such that\[
\mathrm{det}_{n+k}=\sum_{i=1}^{r}f_ig_i.
\]
Let $h_1,\cdots,h_m$ be forms in $\{f_1,\cdots,f_r,g_1,\cdots,g_r\}$ of degree at most $k$. Clearly, $m\le 2r$. Moreover, we may assume $r<\frac{n}{k+1}$. Otherwise, $r\ge\frac{n}{k+1}\ge \frac{a_n}{k+1}$ is trivial. Next, we show that there exists $A\in\mathrm{Mat}_{(n+k)\times(n+k)}$ such that \begin{enumerate}
    \item $\rank(A)=k$,
    \item $h_1,\cdots,h_m$ vanishes at $A$.
\end{enumerate}
This is simply a dimension counting. Let \(N=n+k\), and let
\(\Sigma_i\subset \mathbb P(\operatorname{Mat}_N)\) be the projective
determinantal variety of matrices of rank at most \(i\). Then
\[
\dim \Sigma_i=2Ni-i^2-1.
\]
In particular,
\[
\dim \Sigma_k-\dim \Sigma_{k-1}=2N-2k+1=2n+1.
\]
Let
\[
Y=\Sigma_k\cap V_+(h_1,\ldots,h_m).
\]
Since \(Y\) is cut out from \(\Sigma_k\) by at most \(m\) hypersurfaces,
Krull's principal ideal theorem gives
\[
\dim Y\ge \dim \Sigma_k-m.
\]
By the assumption \(r<n/(k+1)\), we have
\[
m\le 2r<\frac{2n}{k+1}<2n+1
=\dim \Sigma_k-\dim \Sigma_{k-1}.
\]
Therefore \(Y\) cannot be contained in \(\Sigma_{k-1}\). Hence there exists
a point \(A\in Y\setminus \Sigma_{k-1}\), i.e. a matrix \(A\) of rank
exactly \(k\), such that \(h_1(A)=\cdots=h_m(A)=0\).

Since $A$ has rank $k$, we can assume there exists $P$ and $Q$ in $GL_{n+k}(\mathbb{F})$ such that \[
    PAQ=\begin{pmatrix}
        I_{k} &0_{k\times n}\\
        0_{n\times k} &0_{n\times n}
    \end{pmatrix}.
    \]
    Consider a linear homomorphism $Y=PXQ$, where $X,Y$ are variable matrices. We have \[
    \mathrm{det}_{n+k}(Y)=\mathrm{det}_{n+k}(PXQ)=\mathrm{det}_{n+k}(PQ)\mathrm{det}_{n+k}(X)=\mathrm{det}_{n+k}(PQ)\sum_{i=1}^{r}f_i(X)g_i(X).
    \]
    We may assume $f_i(X)=\widetilde{f}_{i}(Y)$, $g_i(X)=\widetilde{g}_{i}(Y)$ and $h_i(X)=\widetilde{h}_{i}(Y)$. Clearly, \[
    \mathrm{str}_{Y}(\mathrm{det}_{n+k}(Y))=\mathrm{str}_{Y}(\mathrm{det}_{n+k}(X)),
    \]
    and\[
    \mathrm{det}_{n+k}(Y)=\sum_{i=1}^{r}\widetilde{f}_{i}(Y)\widetilde{g}_{i}(Y),\quad \widetilde{h}_i(\begin{pmatrix}
        I_k &0\\
        0 &0
    \end{pmatrix})=h_{i}(A)=0.
    \]
    Let $B\coloneqq\begin{pmatrix}
        I_k &0\\
        0 &0
    \end{pmatrix}$ be an $(n+k)\times(n+k)$ matrix. 
    On the one hand, $D_{B}^{k}(\mathrm{det}_{n+k})=\mathrm{det}_{n}$ by Lemma~\ref{lem-derivition of det}. 
    On the other hand, Lemma~\ref{lem-Leibniz rule} implies\[
    D_{B}^{k}\left(\sum_{i=1}^{r}f_ig_i\right)=\sum_{i=1}^{r}\sum_{j=0}^{k}D_{B}^{j}(f_i)D_{B}^{k-j}(g_i).
    \]
    This gives a decomposition of $\mathrm{det}_{n}$ with at most $(k+1)r$ terms. Note that the above is a strength decomposition since any nonzero term can only be the case $uv$, where $u,v$ are forms with positive degree; this is because any degree zero term $D^{\mathrm{deg}(h_i)}_{B}(h_i)=h_{i}(B)=0$. This completes the proof.
\end{proof}
The last ingredient is a well-known bound on the consecutive prime difference.
\begin{theorem}[\cite{baker2001difference}]\label{thm-prime gap}
    For any sufficiently large $x$, there is a prime between $[x-x^{0.525},x]$.
\end{theorem}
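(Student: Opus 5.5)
This statement is the Baker--Harman--Pintz theorem on gaps between consecutive primes. I do not intend to reprove it here; the plan is to invoke it as a black box from \cite{baker2001difference}. For completeness, here is the route I would follow if a proof were required. It is essentially theirs and I have not checked the details.

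Set $y=x^{0.525}$. The goal is a lower bound
\[
\pi(x)-\pi(x-y)\ge c\,\frac{y}{\log x}
\]
for some absolute $c>0$ and all large $x$. The asymptotic $\sim y/\log x$ is out of reach with current zero-density estimates once the exponent drops below $7/12$, so one aims only for a positive proportion. Use Harman's alternative sieve. Write the indicator of primes in $(x-y,x]$ via Buchstab's identity as $S(\mathcal A,x^{1/2})$, where $\mathcal A=(x-y,x]\cap\mathbb Z$. Then decompose repeatedly into sums $S(\mathcal A_{p_1\cdots p_j},p_j)$.

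The decomposed terms are compared with the same sums over a long comparison interval $\mathcal B=(x-x/\log^{A}x,\,x]$, where they can be evaluated asymptotically. Comparing $\mathcal A$ with $\mathcal B$ reduces, via Perron's formula, to mean-value and large-value estimates for Dirichlet polynomials on $t\le x^{1-0.525+\varepsilon}$. These are treated in two families. Type I sums are handled with an additional smooth variable, using fourth-moment bounds for $\zeta$ and Huxley-type large-value estimates. Type II sums, with bilinear structure in which both variables lie in suitable ranges, are handled by the mean value theorem together with Heath-Brown's and Watt's mean-value bounds for $\zeta$ twisted by Dirichlet polynomials.

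The terms that cannot be given an asymptotic formula are discarded, which is allowed because they are nonnegative. The remaining step is to verify numerically that the total Buchstab integral of the discarded regions is strictly less than $1$.

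The main obstacle is exactly this last step at exponent $0.525$. One must enlarge the Type II ranges enough, using Watt's theorem and role-reversal decompositions, that the discarded four- and five-dimensional Buchstab integrals sum to below $1$. I would first try to reproduce the range table of \cite{baker2001difference} and then compute the corresponding integrals numerically.

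For the purposes of this paper, Theorem~\ref{thm-prime gap} is simply quoted. What the subsequent argument needs is the consequence $n-p(n)\le n^{0.525}$ for large $n$. Combined with Lemma~\ref{k-step weak monotonicity} and Theorem~\ref{main theorem-prime}, applied with $k=n-p(n)$, this gives
\[
\str(\mathrm{det}_n)\ge \frac{p(n)}{n-p(n)+1}\ge (1-o(1))n^{0.475}.
\]
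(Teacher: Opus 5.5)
Your proposal is correct and matches the paper, which also just quotes the Baker--Harman--Pintz theorem without proof and uses it only through $n-p(n)\le n^{0.525}$ for large $n$, exactly as in your final step. Your sketch of Harman's sieve is not needed here, and its aside that asymptotics are out of reach below exponent $7/12$ is outdated (Guth and Maynard's zero-density estimate gives them for intervals of length $x^{17/30+\varepsilon}$), but neither point affects the argument.
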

\begin{theorem}[A polynomial lower bound for all sizes]\label{cmain theorem-general}
For all $n\ge 2$, 
\[
  \str(\mathrm{det}_n)\ge \frac{p(n)}{n-p(n)+1}.
\]
In particular, for sufficiently large $n$, \[
  \str(\mathrm{det}_n)\ge (1-o(1))n^{0.475}.
\]
\end{theorem}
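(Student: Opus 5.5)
The plan is to combine Theorem~\ref{main theorem-prime} with the $k$-step weak monotonicity of Lemma~\ref{k-step weak monotonicity}, applied with base size $p(n)$. Fix $n\ge 2$ and write $p=p(n)$ and $k=n-p\ge 0$. If $k=0$, then $n$ is prime and Theorem~\ref{main theorem-prime} gives $\str(\mathrm{det}_n)=n=p(n)/(n-p(n)+1)$, so the bound holds with equality. If $k\ge 1$, apply Lemma~\ref{k-step weak monotonicity} with base $p$ (note $p\ge 2$) and step $k$:
\[
\str(\mathrm{det}_n)=a_{p+k}\ge \frac{1}{k+1}a_p=\frac{p(n)}{n-p(n)+1},
\]
where the last equality is Theorem~\ref{main theorem-prime}. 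This is the first assertion.

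For the asymptotic statement, use Theorem~\ref{thm-prime gap} with $x=n$. For all sufficiently large $n$ it gives a prime in $[n-n^{0.525},n]$, so $p(n)\ge n-n^{0.525}$ and $n-p(n)+1\le n^{0.525}+1$. Hence
\[
\str(\mathrm{det}_n)\ge \frac{n-n^{0.525}}{n^{0.525}+1}
=n^{0.475}\cdot\frac{1-n^{-0.475}}{1+n^{-0.525}}=(1-o(1))\,n^{0.475}.
\]

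No step is a real obstacle, since all the substance sits in Theorem~\ref{main theorem-prime} and Lemma~\ref{k-step weak monotonicity}. Two points need checking. First, the monotonicity lemma is stated for $k\in\mathbb N_+$, so the prime case $k=0$ must be handled separately as above. Second, the lemma's hypothesis $n\ge 2$ refers to the base size; here that base is $p(n)\ge 2$, which holds for every $n\ge 2$.
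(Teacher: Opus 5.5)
Your proposal is correct and follows the paper's proof: apply the $k$-step weak monotonicity lemma with base $p(n)$ and step $n-p(n)$, use $\str(\mathrm{det}_{p(n)})=p(n)$ from the prime case, and then insert the Baker--Harman--Pintz prime-gap bound for the asymptotic statement. Your separate handling of the prime case $n-p(n)=0$ fills in a detail the paper leaves implicit, since the lemma is only stated for $k\ge 1$.
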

\begin{proof}
The first claim can be obtained from Theorem~\ref{main theorem-prime} and Lemma~\ref{k-step weak monotonicity}. The second claim can be deduced from the first claim and Theorem~\ref{thm-prime gap}.
\end{proof}
\begin{lemma}\label{lem-birch rank of det}
    $\mathrm{Brk}(\mathrm{det}_{n})=4$ for any $n\ge 2$.
\end{lemma}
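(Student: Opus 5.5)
We compute the singular locus of $\det_n$ on the affine space $\mathrm{Mat}_n$ and then its codimension. The partial derivative $\partial \det_n/\partial x_{ij}$ is the $(i,j)$ cofactor, i.e.\ $(-1)^{i+j}$ times the $(n-1)\times(n-1)$ minor obtained by deleting row $i$ and column $j$; this holds in every characteristic, since $\det_n$ is multilinear in the entries of each row. Hence
\[
V\left(\frac{\partial \det_n}{\partial x_{ij}}\right)_{i,j}=\{X\in\mathrm{Mat}_n:\ \text{all }(n-1)\text{-minors of }X\text{ vanish}\}=\{X:\rank X\le n-2\}.
\]
The Nullstellensatz is not needed, since the Birch rank depends only on the zero set: we only need set-theoretic equality, and a matrix has rank $\le n-2$ exactly when all its $(n-1)$-minors vanish.

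Next we use the standard dimension formula for affine determinantal varieties. The locus of $n\times n$ matrices of rank at most $i$ is irreducible of dimension $2ni-i^2$. This is the affine cone version of the projective count $2Ni-i^2-1$ already used in the proof of Lemma~\ref{k-step weak monotonicity}. It can be seen by parametrizing via $\mathrm{Mat}_{n\times i}\times\mathrm{Mat}_{i\times n}\to\mathrm{Mat}_n$, whose generic fibre is a $\mathrm{GL}_i$-torsor, giving dimension $2ni-i^2$. With $i=n-2$ the codimension is
\[
n^2-\bigl(2n(n-2)-(n-2)^2\bigr)=n^2-(n^2-4)=4.
\]
Equivalently, the codimension of the rank $\le n-r$ locus is $r^2$, here with $r=2$.

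The degenerate case $n=2$ is handled separately. There $\det_2=x_{11}x_{22}-x_{12}x_{21}$, its partials are the four coordinates up to sign, and the singular locus is the origin, of codimension $4=\dim\mathrm{Mat}_2$. This agrees with the formula, since the locus of rank $\le 0$ is $\{0\}$.

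There is no real obstacle. The only point needing care is the validity of the dimension formula over an arbitrary algebraically closed field, and the torsor/fibre count gives this uniformly in the characteristic.
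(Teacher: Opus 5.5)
Your proposal is correct and follows essentially the same route as the paper: both identify the common zero locus of the partials (the signed $(n-1)$-minors) with the rank $\le n-2$ locus and compute its codimension as $n^2-(n-2)(n+2)=4$. Your version adds details the paper leaves implicit, namely that the cofactor identity holds in every characteristic and the dimension count via $\mathrm{Mat}_{n\times i}\times\mathrm{Mat}_{i\times n}\to\mathrm{Mat}_n$.
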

\begin{proof}
    Note that the singular locus of $V(\mathrm{det}_{n})$ is the determinantal variety of matrices with rank at most $n-2$, which is of codimension $n^2-(n-2)(n+2)=4$.
\end{proof}
\section{Lower bound of partition rank}\label{sec:monotonicity}
Since we can identify $\mathrm{det}_n$ as an $n$-multilinear map on the column vector space, we can consider the partition rank of $\mathrm{det}_{n}$. In this section, for $n\ge 2$ we denote $b_{n}\coloneqq\mathrm{prk}(\mathrm{det}_{n})$. Clearly, by definition, $b_n\ge a_n$. Hence we have a trivial lower bound on $b_n$ according to Theorem~\ref{cmain theorem-general}. However, the lower bound for the partition rank case can be better. In what follows, we prove a stronger monotonicity lemma for partition rank. 
\begin{lemma}[Monotonicity]\label{partition rank monotonicity}
   For every $n\ge 2$, it holds that $b_{n+1}\ge b_n$.
\end{lemma}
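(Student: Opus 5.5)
We restrict an optimal partition-rank decomposition of $\mathrm{det}_{n+1}$ to a suitable slice on which the determinant becomes (a nonzero multiple of) $\mathrm{det}_n$, and check that every summand remains of partition rank at most one. View $\mathrm{det}_{n+1}$ as the $(n+1)$-linear form $T(v_1,\ldots,v_{n+1})$ in the columns $v_j\in K^{n+1}$, and let $r=b_{n+1}$. Write
\[
T=\sum_{a=1}^{r}A_a\big((v_i)_{i\in I_a}\big)\,B_a\big((v_i)_{i\notin I_a}\big),
\]
where each $I_a\subsetneq[n+1]$ is nonempty. By symmetry between the two factors we may assume $n+1\notin I_a$ for every $a$, so the index $n+1$ always sits in the $B_a$-factor.

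First, the trivial case. By the Laplace expansion $b_n\le n$, so if $r\ge n$ there is nothing to prove; assume $r\le n-1$. Call a summand \emph{bad} if $[n+1]\setminus I_a=\{n+1\}$. In that case $B_a$ is a linear form $\ell_a$ on $K^{n+1}$. There are at most $r\le n$ bad summands, so the linear forms $\ell_a$ have a common nonzero zero $u\in K^{n+1}$.

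Next, the slice. Choose an $n$-dimensional subspace $W\subset K^{n+1}$ complementary to $Ku$, and consider the $n$-linear form
\[
T'(w_1,\ldots,w_n):=T(w_1,\ldots,w_n,u),\qquad w_i\in W.
\]
This form is alternating and nonzero, since a basis of $W$ together with $u$ is a basis of $K^{n+1}$. Hence, after identifying $W\cong K^n$ by a linear change of coordinates, $T'=c\cdot\mathrm{det}_n$ with $c\neq0$. Partition rank is invariant under invertible linear changes of each variable and under nonzero scaling, so $\operatorname{prk}(T')=b_n$.

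Finally, restrict each summand and count. In a bad summand, $B_a(u)=\ell_a(u)=0$, so the summand vanishes. In every other summand, $[n+1]\setminus I_a$ contains $n+1$ and at least one further index. Thus $B_a(\cdot,u)$ is a nonzero-arity multilinear form in the indices $([n+1]\setminus I_a)\setminus\{n+1\}$, while $A_a$ keeps its nonempty index set $I_a\subseteq[n]$. Both index sets are nonempty and together partition $[n]$, so the restricted summand has partition rank at most one (it may be zero). This gives $b_n=\operatorname{prk}(T')\le r=b_{n+1}$.

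The only delicate point is the bad summands, whose restriction would otherwise be a full $n$-linear form. The choice of $u$ in the kernel of their linear factors removes them, and this is possible precisely because of the reduction to $r\le n$.
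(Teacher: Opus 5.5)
Your proof is correct and takes essentially the same route as the paper: reduce to $b_{n+1}\le n-1$, then choose the last column $u$ in the common kernel of the linear factors of the summands where $v_{n+1}$ is isolated, so those summands vanish, and identify the restricted tensor with a nonzero multiple of $\mathrm{det}_n$. The differences are cosmetic. The paper applies an elementary matrix $P$ with $Pc=c_{n+1}e_{n+1}$ and reads off $\mathrm{det}_n$ of the top $n\times n$ block, keeping $v_1,\ldots,v_n\in K^{n+1}$. You instead restrict $v_1,\ldots,v_n$ to a complement $W$ of $Ku$ and use that alternating $n$-forms on $W$ are multiples of the determinant, and you make the normalization $n+1\notin I_a$ explicit where the paper leaves it implicit.
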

\begin{proof}
    We may assume $b_{n+1}\le n-1$ without loss of generality, since otherwise $b_{n+1}\ge n\ge b_n$. Furthermore, we write
    \[
    \mathrm{det}_{n+1}(v_1,\cdots,v_{n+1})=\sum_{i=1}^{r}f_i((v_j)_{j\in [n]})\, g_i(v_{n+1})+\sum_{i=r+1}^{b_{n+1}}f_{i}((v_{j})_{j\in A_{i}\subsetneq[n]})\,g_{i}((v_{j})_{j\in [n+1]\backslash A_{i}}).
    \]
    Since $r\le b_{n+1}\le n-1$, we can find some $0\neq c\in\mathbb{F}^{n+1}$ such that $g_1(c)=0,\cdots,g_r(c)=0$. Then \[
    \mathrm{det}_{n+1}(v_1,\cdots,v_n,c)=\sum_{i=r+1}^{b_{n+1}}f_{i}((v_{j})_{j\in A_{i}\subsetneq[n]})\,g_{i}((v_{j})_{j\in [n]\backslash A_{i}},c).
    \]
    This shows that $\mathrm{prk}(\mathrm{det}_{n+1}(v_1,\cdots,v_n,c))\le b_{n+1}-r\le b_{n+1}$. Write \(
    c=\begin{pmatrix}
        c_1\\
        c_2\\
        \vdots\\
        c_{n+1}\\
    \end{pmatrix}
    \).  
    Without loss of generality, we may assume that $c_{n+1}\neq 0$. There exists an elementary matrix $P\in GL_{n+1}(\mathbb{F})$ such that \[
    P\cdot c=\begin{pmatrix}
        0\\
        \vdots\\
        0\\
        c_{n+1}
    \end{pmatrix},\quad \mathrm{det}_{n+1}(P)=1.
    \] 
    Let us write $P\cdot (v_1,\cdots,v_n)=\begin{pmatrix}
        X_{n\times n}\\
        Y_{1\times n}
    \end{pmatrix}$. Then we have \[
    \mathrm{det}_{n+1}(v_1,\cdots,v_n,c)=\mathrm{det}_{n+1}(P\cdot (v_1,\cdots,v_n,c))=c_{n+1}\cdot\mathrm{det}_{n}(X_{n\times n}).
    \]
    Clearly, the entries of $X_{n\times n}$ are independent variables. Therefore, we conclude that \[
    \mathrm{prk}(\mathrm{det}_n)=\mathrm{prk}(\mathrm{det}_{n+1}(v_1,\cdots,v_n,c))
    \le b_{n+1},
    \]
    which shows $b_{n+1}\ge b_n$.
\end{proof}
Combining Lemma~\ref{partition rank monotonicity} and Theorem~\ref{main theorem-prime}, we obtain the following result.
\begin{theorem}\label{main theorem-prk}
For every $n\ge 2$,
    \[
  \mathrm{prk}(\mathrm{det}_n)\ge p(n).
\]
In particular, for sufficiently large $n$, \[
  \mathrm{prk}(\mathrm{det}_n)\ge n-n^{0.525}.
\]
\end{theorem}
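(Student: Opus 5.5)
The plan is to combine three ingredients: a comparison between partition rank and strength at prime sizes, the monotonicity Lemma~\ref{partition rank monotonicity}, and the prime-gap estimate Theorem~\ref{thm-prime gap}.

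\textbf{Step 1: $b_p\ge p$ for every prime $p$.} Take a partition rank decomposition of $\mathrm{det}_p$, viewed as a $p$-linear form in the columns. Each summand has the form $A((v_i)_{i\in I})\,B((v_i)_{i\notin I})$ with $I$ a nonempty proper subset of $[p]$. In terms of the $p^2$ matrix entries, $A$ is a homogeneous polynomial of degree $|I|$ and $B$ is one of degree $p-|I|$. Both degrees lie strictly between $0$ and $p$, so the decomposition is also a strength decomposition of the form $\mathrm{det}_p$. This gives $b_p\ge a_p$, and Theorem~\ref{main theorem-prime} gives $a_p=p$.

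\textbf{Step 2: iterate monotonicity.} Since $p(n)\le n$, applying Lemma~\ref{partition rank monotonicity} repeatedly, $n-p(n)$ times, gives
\[
b_n\ge b_{n-1}\ge\cdots\ge b_{p(n)}\ge p(n).
\]
This is the first claim.

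\textbf{Step 3: asymptotic bound.} Apply Theorem~\ref{thm-prime gap} with $x=n$. For large $n$ there is a prime in $[n-n^{0.525},n]$, so $p(n)\ge n-n^{0.525}$. Hence $\mathrm{prk}(\mathrm{det}_n)\ge n-n^{0.525}$.

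\textbf{Main obstacle.} The only point that needs care is Step~1, namely translating partition rank into strength. One must check that an $|I|$-multilinear form in the columns is a homogeneous polynomial of degree exactly $|I|$ in the entries, or else zero, in which case the summand can simply be dropped. The condition that $I$ is a nonempty proper subset then gives exactly the degree constraint $0<\deg<p$ required in the definition of strength.
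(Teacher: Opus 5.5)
Your proposal is correct and follows essentially the same route as the paper. The paper also notes $b_n\ge a_n$ because a partition-rank decomposition is a strength decomposition, then iterates Lemma~\ref{partition rank monotonicity} from $b_{p(n)}\ge a_{p(n)}=p(n)$ (Theorem~\ref{main theorem-prime}), and finally applies Theorem~\ref{thm-prime gap} to get the asymptotic bound.
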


\section{Strength versus partition rank}\label{sec:str-vs-par}
Since we have studied the strength and partition rank of \(\det_n\), it is
natural to ask how these two quantities are related. More generally, if $T\in V_1^*\otimes\cdots\otimes V_d^*$ is a multilinear form, then every partition-rank decomposition of $T$ gives a strength decomposition of $f_T$, where $f_T$ denotes the block-multilinear homogeneous polynomial associated to $T$ on $V_1\oplus\cdots\oplus V_d$; hence,
\[
\str(f_T)\le \operatorname{prk}(T).
\]
We ask whether equality always holds:
\begin{question}\label{conj-str=prk}
     Does \(\operatorname{str}(f_T)=\operatorname{prk}(T)\) hold for every multilinear form \(T\)?
\end{question}

This can be viewed as a multilinear analogue, in spirit, of Comon's question of comparing unrestricted and symmetry-preserving tensor decompositions \cite{comon2008symmetric}. The original Comon's conjecture is false in full generality \cite{wangseigal2023lower}, but the additional multigrading present here may impose substantially stronger rigidity. Clearly, if $\str(f_{T})= \operatorname{prk}(T)$ holds, then we can obtain a better estimation of $\mathrm{str}(\mathrm{det}_n)$. However, we suspect that this equality is too strong to hold in full generality.

For the determinant, our result indicates that the strength and partition rank of $\mathrm{det}_n$ are equal for every prime number $n$. For $n=4$, Lampert and Moshkovitz \cite{lampert2026determinant} proved $$\operatorname{prk}(\mathrm{det}_4)=3.$$ 
Next, we show that the strength of $\mathrm{det}_4$ is equal to $\operatorname{prk}(\mathrm{det}_4)$.

\begin{lemma}
    $\str(\mathrm{det}_4)= 3$.
\end{lemma}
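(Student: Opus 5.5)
The upper bound $\str(\mathrm{det}_4)\le 3$ should be formal. Viewing $\mathrm{det}_4$ as a $4$-linear form $T$ in its columns, the associated block-multilinear polynomial $f_T$ on $V_1\oplus\cdots\oplus V_4$ is exactly $\mathrm{det}_4$ as a polynomial in the matrix entries. So $\str(\mathrm{det}_4)\le\operatorname{prk}(\mathrm{det}_4)=3$ by the inequality recalled at the start of this section together with the result of Lampert and Moshkovitz. The real work is to rule out $\str(\mathrm{det}_4)\le 2$.

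Suppose $\mathrm{det}_4\in\langle f_1,f_2\rangle$ with $\deg f_i=d_i\in\{1,2,3\}$. I would first run the intersection-theoretic argument of Theorem~\ref{main theorem-prime}. By Lemma~\ref{lem-Chow ring of PGL_n},
\[
CH^*(\mathrm{PGL}_4)\cong\mathbb Z[x]/(4x,6x^2,4x^3,x^4).
\]
In degree $2$ the relations generated are $4x\cdot x=4x^2$ and $6x^2$, hence $2x^2$, so $CH^2(\mathrm{PGL}_4)\cong\mathbb Z/2$, generated by $x^2$. By Lemma~\ref{lem-Chern class of twisting sheaf}, $c_1(\mathcal L_1)$ generates $CH^1\cong\mathbb Z/4$, so it is an odd multiple of $x$ and $c_1(\mathcal L_1)^2=x^2\neq 0$. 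Lemmas~\ref{lem-intepretion}, \ref{nonvanishing lemma} and \ref{lemma-Computational lemma of Chern class} then give $d_1d_2\,x^2=0$. This is a contradiction whenever $d_1,d_2\in\{1,3\}$, so it disposes of the degree patterns $(1,1),(1,3),(3,3)$. The pattern $(1,1)$ is also trivially impossible, since $\mathrm{det}_4$ would vanish on a codimension-$2$ linear space, contradicting Lemma~\ref{lem-birch rank of det}.

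The remaining patterns are those with some $d_i=2$: up to order, $(1,2)$, $(2,2)$ and $(2,3)$. Here $d_1d_2$ is even, so the Chow class vanishes and the obstruction is blind. This is the main obstacle, and I would handle these cases by direct algebraic arguments.

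\emph{Case $(1,2)$.} Restrict to the hyperplane $H=\{\ell=0\}$, where $\ell=f_1$. Then $\mathrm{det}_4|_H=q\cdot h$ with $\deg h=2$, so $\mathrm{det}_4|_H$ would be reducible. After the $\mathrm{GL}_4\times\mathrm{GL}_4$ action, $\ell=\operatorname{tr}(MX)$ with $M$ of rank $1,\dots,4$, so this reduces to checking four explicit hyperplane sections. I expect each of these to be irreducible, for instance by exhibiting it as linear in some entry with coprime coefficients built from $3\times3$ minors.

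\emph{Case $(2,3)$.} Write $\mathrm{det}_4=f_1g_1+f_2g_2$ with $\deg f_1=2$, $\deg f_2=3$. Then $f_2$ pairs with a linear form $g_2$, so $\mathrm{det}_4\in\langle f_1,g_2\rangle$ with degrees $(2,1)$, reducing to the previous case.

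\emph{Case $(2,2)$.} Here $\mathrm{det}_4=q_1q_2+q_3q_4$, and this is the genuinely hard case. I would try the Hasse-derivative argument of Lemma~\ref{k-step weak monotonicity}, with $k=2$ and the direction a rank-$2$ matrix $A$ on which all four quadrics vanish. The rank-$2$ locus $\Sigma_2$ has dimension $11$, so such an $A$ exists. Lemma~\ref{lem-Leibniz rule} then gives
\[
\mathrm{det}_2=\sum_{i=1,3}\bigl(D^1_A q_iD^1_A q_{i+1}+q_iD^2_Aq_{i+1}+D^2_Aq_iq_{i+1}\bigr).
\]
Since each $D^2_Aq_j=q_j(A)=0$, the right-hand side is a sum of two products of linear forms. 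That is no contradiction as it stands, so I would impose extra vanishing (for example $D^1_Aq_j(X_0)=0$ for suitable points) to force $\mathrm{det}_2$ to be a single product of linear forms, which is false.

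Failing that, I would use the singular locus instead. The ideal of the rank-$\le2$ locus is generated by the $3\times3$ minors, and $V(q_1,\dots,q_4)$, of codimension $\le4$, must lie inside $\mathrm{Sing}=\Sigma_2$, of codimension exactly $4$. By irreducibility, $V(q_1,\ldots,q_4)$ would then have $\Sigma_2$ as a component of the same dimension. I would compare degrees, using that $\deg\Sigma_2=20$ for $4\times4$ matrices while a complete intersection of four quadrics has degree $\le16$, to get the contradiction.
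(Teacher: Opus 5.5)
Your proof is correct, but it takes a more roundabout route than the paper. The paper does no case split. From $\det_4=g_1h_1+g_2h_2$ it observes that $V(g_1,h_1,g_2,h_2)$ lies in the singular locus $\Sigma_2$. Krull's theorem and the irreducibility of $\Sigma_2$ (codimension $4$) then force all four factors to vanish on $\Sigma_2$. The proof concludes because the prime ideal of $\Sigma_2$ is generated by the $3\times 3$ minors, so it has no nonzero forms of degree $\le 2$, while each product $g_ih_i$ has a factor of degree $\le 2$.

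Your fallback for the $(2,2)$ pattern is exactly this argument, with the last step replaced by B\'ezout against $\deg\Sigma_2=20$. The Krull/irreducibility step never uses the degrees, so the same argument works verbatim for every pattern; the B\'ezout bounds are $9$, $12$, $16$, all $<20$. Hence the Chow computation, the hyperplane-section step, and the Hasse-derivative attempt (which, as you note, gives no contradiction) are all unnecessary. In the final step you trade the ideal-theoretic input (primality and degree-$3$ generation of the determinantal ideal) for the degree formula for determinantal varieties plus refined B\'ezout. Your Chow calculation, $CH^2(\mathrm{PGL}_4)\cong\mathbb Z/2$ with $c_1(\mathcal L_1)^2\neq 0$, is correct. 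It usefully shows that for $n=4$ the intersection-theoretic obstruction only detects the parity of $d_1d_2$.

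If you keep the case analysis, two loose ends remain. First, you assert rather than prove that every hyperplane section $\det_4|_H$ is irreducible. This is true: $\nabla\det_4$ is the cofactor matrix, so $\operatorname{Sing}(\det_4|_H)$ has codimension at least $3$ in $H$. Second, your aside that the pattern $(1,1)$ contradicts $\operatorname{Brk}(\det_4)=4$ is not valid as stated. The identity $\det_4=\ell_1c_1+\ell_2c_2$ only forces $\operatorname{Sing}\supseteq V(\ell_1,\ell_2,c_1,c_2)$, which has codimension $\le 4$. This is harmless, because your Chow argument already excludes $(1,1)$.
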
 

\begin{proof}
It suffices to show that $\str(\det_4)\ge 3$. Suppose on the contrary that $$\mathrm{det}_4=g_1h_1+g_2h_2$$ with all factors homogeneous of positive degree less than 4. Then the common zero locus of $g_1,h_1,g_2,h_2$ is contained in the singular locus of $\det_4$. The singular locus is the irreducible determinantal variety \(S\) of
\(4\times 4\) matrices of rank at most \(2\), and it has codimension \(4\).
Since \(Y:=V(g_1,h_1,g_2,h_2)\) is cut out by four equations, Krull's
principal ideal theorem gives an irreducible component of \(Y\) of
codimension at most \(4\). As \(Y\subset S\), this component must be equal
to \(S\). Hence \(g_1,h_1,g_2,h_2\) all vanish on \(S\).
 However, the determinantal ideal is prime and generated by the $3\times3$ minors, and therefore has no nonzero elements of degree 1 or 2. Since in each product $g_i h_i$, at least one factor has degree at most 2, this is a contradiction. 
\end{proof}

In the remainder of this section, we prove that the slice rank satisfies the property described above.

\begin{theorem}
For every multilinear form $T$,
\[
\sr^{\rm pol}(f_T)=\sr(T).
\]
\end{theorem}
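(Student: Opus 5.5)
The inequality $\sr^{\rm pol}(f_T)\le \sr(T)$ is immediate. Given a slice decomposition $T=\sum_{a=1}^{r}A_a(v_{i_a})B_a((v_i)_{i\ne i_a})$, each $A_a\in V_{i_a}^*$ is a linear form on $V:=V_1\oplus\cdots\oplus V_d$. Each $B_a$ becomes a form of degree $d-1$ on $V$. So $f_T=\sum_a A_aB_a$ is a polynomial slice decomposition with $r$ terms.

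For the reverse inequality I would translate both ranks into statements about vanishing on linear subspaces, and then use a torus degeneration.

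\emph{Step 1 (slice rank via coordinate subspaces).} $\sr(T)\le s$ holds if and only if there are subspaces $U_i\subset V_i$ with $\sum_i\codim U_i\le s$ and $T|_{U_1\times\cdots\times U_d}=0$.
\begin{itemize}
\item For the forward direction, take $U_i$ to be the common kernel of the $A_a$ with $i_a=i$.
\item For the converse, pick for each $i$ a basis $\lambda_{i,1},\dots,\lambda_{i,c_i}$ of $U_i^\perp$, with $c_i=\codim U_i$, and extend it to a basis of $V_i^*$. Expand $T$ in the resulting product basis of $V_1^*\otimes\cdots\otimes V_d^*$. Every basis tensor not divisible by some $\lambda_{i,j}$ restricts nontrivially to $\prod U_i$, and these restrictions are linearly independent. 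Hence $T$ lies in the span of tensors containing some $\lambda_{i,j}$ as a factor.
\item Grouping by the first such factor writes $T=\sum_{i,j}\lambda_{i,j}(v_i)B_{i,j}$ with $\sum_i c_i\le s$ terms.
\end{itemize}

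\emph{Step 2 (polynomial slice rank gives a subspace).} Suppose $f_T=\sum_{a=1}^{r}\ell_ag_a$. Then $f_T$ vanishes on $W:=\bigcap_a\ker\ell_a\subset V$, a linear subspace of codimension $c\le r$.

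\emph{Step 3 (degeneration to a multigraded subspace).} This is the step I expect to be the main point. Let the torus $(\mathbb G_m)^d$ act on $V$ by scaling $V_i$ with $t_i$. Since $f_T$ is multihomogeneous of multidegree $(1,\dots,1)$, we have $f_T(t\cdot v)=t_1\cdots t_d f_T(v)$, so $f_T$ also vanishes on $t\cdot W$. The set of $W'\in\mathrm{Gr}(\dim W,V)$ with $f_T|_{W'}=0$ is closed. Now take the one-parameter subgroup $\lambda(s)=(s^{w_1},\dots,s^{w_d})$ with distinct integer weights $w_i$. By properness of the Grassmannian, the limit $W_0=\lim_{s\to0}\lambda(s)\cdot W$ exists, it is $\lambda$-fixed, and $f_T$ vanishes on it.

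Because the weights are distinct, the $\lambda$-weight spaces of $V$ are exactly the $V_i$. A $\lambda$-stable subspace is therefore the direct sum of its intersections with the weight spaces, so $W_0=U_1\oplus\cdots\oplus U_d$ with $U_i=W_0\cap V_i$. Concretely, $U_i$ is the image in $V_i$ of the part of $W$ lying in the appropriate filtration piece; one can also verify the decomposition directly by writing Plücker coordinates. Moreover $\sum_i\codim_{V_i}U_i=\codim W_0=c\le r$.

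\emph{Step 4 (conclusion).} For $u_i\in U_i$, the multilinear part of $f_T(u_1+\cdots+u_d)$ is $T(u_1,\dots,u_d)$. More precisely, $f_T(t_1u_1+\cdots+t_du_d)=t_1\cdots t_dT(u_1,\dots,u_d)$ vanishes for all $t$, so $T|_{\prod U_i}=0$. By Step 1, $\sr(T)\le r$.

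The only delicate point is the limit argument of Step 3: checking that torus-fixed points of the Grassmannian are multigraded, and that vanishing of $f_T$ passes to the limit. Both follow from standard facts, using that $K$ is algebraically closed.
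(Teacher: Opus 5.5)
Your proof is correct, and its skeleton matches the paper's. Both arguments pass from $f_T=\sum_a\ell_a g_a$ to the subspace $W=\bigcap_a\ker\ell_a$. Both then replace $W$ by a block-graded subspace $U_1\oplus\cdots\oplus U_d$ with $\sum_i\codim_{V_i}U_i=\codim_V W\le r$ on which $T$ vanishes, and finish by expanding in bases of the $U_i^\perp$.

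The difference is in how the graded subspace is produced and how the vanishing is certified.

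\textbf{The paper's route.} It works by hand. It sets $U_i=\pi_i\bigl(W\cap(V_i\oplus\cdots\oplus V_d)\bigr)$ and picks block upper-triangular lifts $w_i\in W$ of the $u_i$. It then reads off $T(u_1,\dots,u_d)$ as the coefficient of $t_1\cdots t_d$ in $f_T(\sum_i t_iw_i)$, and shows this coefficient is zero via the Combinatorial Nullstellensatz.

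\textbf{Your route.} You obtain the graded subspace as the limit $\lim_{s\to0}\lambda(s)\cdot W$ in the Grassmannian. With increasing weights $w_1<\cdots<w_d$, this limit is exactly the paper's $\bigoplus_i\pi_i\bigl(W\cap(V_i\oplus\cdots\oplus V_d)\bigr)$, as your ``concretely'' remark suggests. So the two constructions coincide, and the paper's coefficient extraction is essentially a coordinate computation of your limit.

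\textbf{What each buys.} Your approach is more conceptual:
\begin{itemize}
\item Vanishing on $W_0$ comes for free from $f_T(t\cdot v)=t_1\cdots t_d f_T(v)$ together with closedness of the vanishing condition.
\item Step 4 collapses to the single evaluation $f_T(u_1+\cdots+u_d)=T(u_1,\dots,u_d)$.
\item The argument applies unchanged to any multihomogeneous form.
\end{itemize}
The paper's approach is purely linear-algebraic. It needs neither properness of the Grassmannian nor the description of $\mathbb G_m$-fixed points, and it works verbatim over an arbitrary field, including finite ones. Over a finite field, your argument would instead require reading ``$f_T|_{W'}=0$'' as a polynomial identity rather than as pointwise vanishing.
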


\begin{proof}
Let $T\in V_1^*\otimes\cdots\otimes V_d^*$ be a $d$-linear form. A tensor slice decomposition is immediately a polynomial slice decomposition, so
$\sr^{\rm pol}(f_T)\le \sr(T)$.

Conversely, suppose
$f_T=\sum_{a=1}^r\ell_a g_a$, where $\ell_i, g_i\in S$ are homogeneous polynomials with
$\deg \ell_i=1$ and $\deg g_i=d-1$, and set
\[
W:=\bigcap_{a=1}^r\ker \ell_a\subseteq V.
\]
Then $\codim_V W\le r$ and $f_T$ vanishes on $W$.
We now perform block Gaussian elimination with respect to
$V=V_1\oplus\cdots\oplus V_d$. 
Let $\pi_i:V_i\oplus\cdots\oplus V_d\to V_i$ be the coordinate projection. Write
\[
W_i:=W\cap (V_i\oplus\cdots\oplus V_d),
\]
 and put
$U_i:=\pi_i(W_i)$. Since
$\ker(\pi_i|_{W_i})=W_{i+1}$, we have
\[
\dim U_i=\dim W_i-\dim W_{i+1}.
\]
Hence
\begin{equation}\label{eq:dim-U_i-sum}
    \sum_{i=1}^d\dim U_i=\dim W,
\qquad
\sum_{i=1}^d\codim_{V_i}U_i=\codim_V W\le r. 
\end{equation}

It remains to show that $T$ vanishes on $U_1\times\cdots\times U_d$.
Choose an arbitrary $u_i\in U_i$ for each $i\in [d]$. By the definition of $U_i$, there is
$w_i\in W_i$ whose $i$-th component is $u_i$. Thus,
\[
w_i=(0,\dots,0,u_i,*,\dots,*),
\]
and hence the vectors $w_1,\dots,w_d$ are block upper triangular.
Consider
\[
q(t_1,\dots,t_d):=
 f_T\!\left(\sum_{i=1}^d t_iw_i\right).
\]
Every point $\sum_i t_iw_i$ lies in $W$, so $q$ vanishes on $\F^d$; in particular it vanishes on $\{0,1\}^d$.
Moreover, we have $\deg q\le d$, and the block upper triangularity gives
\begin{equation}\label{eq:coef}
    [t_1\cdots t_d]q=T(u_1,\dots,u_d). 
\end{equation}

Indeed, to obtain the square-free monomial $t_1\cdots t_d$, one must choose the diagonal term $t_i u_i$ from the $i$-th block for every $i$.
By the coefficient form of Alon's Combinatorial Nullstellensatz \cite[Lemma~2.1]{alon1999combinatorial}, a polynomial of total degree at most $d$ with nonzero coefficient of $t_1\cdots t_d$ cannot vanish on $\{0,1\}^d$. Thus \eqref{eq:coef} is zero, and therefore
$T|_{U_1\times\cdots\times U_d}=0$.

Consequently,
\[
T\in\sum_{i=1}^d
V_1^*\otimes\cdots\otimes U_i^\perp\otimes\cdots\otimes V_d^*.
\]
Expanding each $U_i^\perp$ in a basis gives a tensor slice decomposition of length at most
$\sum_i\dim U_i^\perp\le r$ by \eqref{eq:dim-U_i-sum}. Hence
$\sr(T)\le \sr^{\rm pol}(f_T)$, completing the proof.
\end{proof}

\section{Concluding remarks}\label{sec:discussion}
The results above leave several natural questions.

\subsection{What is the asymptotic behavior of strength of the determinant?} We have proved in Theorem~\ref{cmain theorem-general} a polynomial lower bound for $\str(\mathrm{det}_n)$, while the Laplace expansion gives the linear upper bound $\str(\mathrm{det}_n)\le n$.  By contrast, Theorem~\ref{main theorem-prk} determines
\[
\operatorname{prk}(\mathrm{det}_n)=n-o(n),
\]
and in particular $\operatorname{prk}(\mathrm{det}_n)=\Theta(n)$.  It would be especially interesting to decide whether
\[
\str(\mathrm{det}_n)=\Theta(n),
\]
or, more ambitiously, to determine its first-order asymptotics.

\subsection{For which $n$ does $\str(\mathrm{det}_n)=n$ hold?}  The first values exhibit a suggestive pattern:
\[
\str(\mathrm{det}_3)=\str(\mathrm{det}_4)=3,
\qquad
\str(\mathrm{det}_5)=5.
\]
Our main theorem proves $\str(\mathrm{det}_p)=p$ for every prime $p$.  Are primes the only positive integers $n\ge 2$ for which $\str(\mathrm{det}_n)=n$?  More generally, understanding how the arithmetic structure of $n$ influences the strength may lead to better knowledge both of the determinant and of the strength and partition rank. It is also interesting to determine if the Laplace expansion is the only possible (up to some trivial transformations) strength (or partition rank) decomposition for $\mathrm{det}_p$ when $p$ is prime.

\subsection{Can analytic rank and geometric rank be compared with constants independent of the tensor order?}  Chen and Ye proved in \cite{chen2024stability} that geometric rank and analytic rank are equivalent up to multiplicative constants depending on the field and the order of the tensor. Subsequent work \cite{moshkovitz2024uniform} of Moshkovitz--Zhu shows that the multiplicative constant can be independent of underlying finite field.  Related estimates with better constants, still depending on the order of the tensor, were obtained independently by Baily and Lampert in \cite{baily2024strength}.  The best known constants in this comparison still depend on the order $d$.  It is natural to ask whether there exist absolute constants $c,C>0$ such that
\[
c\,\AR(T)\le \GR(T)\le C\,\AR(T)
\]
for every tensor over every finite field, after adopting the standard conventions for these ranks.  Available calculations for natural families are consistent with an absolute comparison: for determinant tensors the two ranks are both $2$ (with a ceiling on analytic rank), while for matrix-multiplication tensors they have the same order of magnitude. 
According to a private communication of Moshkovitz and Zhu \cite{moshkovitzzhu_private}, the
analytic rank and geometric rank of the matrix-multiplication tensor differ by at most an additive constant.  Clarifying whether a dependence on $d$ is genuinely necessary would sharpen our understanding of the relation between analytic and algebro-geometric notions of randomness.

\bibliographystyle{abbrv}
\bibliography{reference}

@article{ananyan2020small,
  title={Small subalgebras of polynomial rings and {S}tillman’s conjecture},
  author={Ananyan, Tigran and Hochster, Melvin},
  journal={Journal of the American Mathematical Society},
  volume={33},
  number={1},
  pages={291--309},
  year={2020}
}

@article{baily2024strength,
  title={Strength is bounded linearly by {B}irch rank},
  author={Baily, Benjamin and Lampert, Amichai},
  journal={arXiv preprint arXiv:2410.00248},
  year={2024}
}

@article{chen2024stability,
  title={Stability of ranks under field extensions},
  author={Chen, Qiyuan and Ye, Ke},
  journal={Discrete Analysis},
  year={2025}
}

@article{moshkovitz2024uniform,
  title={Uniform stability of ranks},
  author={Moshkovitz, Guy and Zhu, Daniel G},
  journal={arXiv preprint arXiv:2411.03412},
  year={2024}
}

@article{adiprasito2021schmidt,
  title={On the {S}chmidt and analytic ranks for trilinear forms},
  author={Adiprasito, Karim and Kazhdan, David and Ziegler, Tamar},
  journal={arXiv preprint arXiv:2102.03659},
  year={2021}
}

@book{fulton2013intersection,
  title={Intersection theory},
  author={Fulton, William},
  volume={2},
  year={2013},
  publisher={Springer Science \& Business Media}
}

@article{kopparty2020geometric,
  title={Geometric rank of tensors and subrank of matrix multiplication},
  author={Kopparty, Swastik and Moshkovitz, Guy and Zuiddam, Jeroen},
  journal={Discrete Analysis},
  year={2020}
}

@article{moshkovitz2022quasi,
  title={Quasi-linear relation between partition and analytic rank},
  author={Moshkovitz, Guy and Zhu, Daniel G},
  journal={arXiv preprint arXiv:2211.05780},
  year={2022}
}

@book{hartshorne2013algebraic,
  title={Algebraic geometry},
  author={Hartshorne, Robin},
  volume={52},
  year={2013},
  publisher={Springer Science \& Business Media}
}

@article{alon1999combinatorial,
  title={Combinatorial nullstellensatz},
  author={Alon, Noga},
  journal={Combinatorics, Probability and Computing},
  volume={8},
  number={1-2},
  pages={7--29},
  year={1999},
  publisher={Cambridge University Press}
}

@article {cohen2023partition,
    AUTHOR = {Cohen, Alex and Moshkovitz, Guy},
     TITLE = {Partition and analytic rank are equivalent over large fields},
   JOURNAL = {Duke Math. J.},
  FJOURNAL = {Duke Mathematical Journal},
    VOLUME = {172},
      YEAR = {2023},
    NUMBER = {12},
     PAGES = {2433--2470},
      ISSN = {0012-7094,1547-7398},
   MRCLASS = {11B30 (15A69)},
  MRNUMBER = {4654054},
MRREVIEWER = {Ben\ Joseph\ Green},
       DOI = {10.1215/00127094-2022-0086},
       URL = {https://doi.org/10.1215/00127094-2022-0086},
}

@article {Naslund20,
    AUTHOR = {Naslund, Eric},
     TITLE = {The partition rank of a tensor and {$k$}-right corners in
              {$\Bbb F_q^n$}},
   JOURNAL = {J. Combin. Theory Ser. A},
  FJOURNAL = {Journal of Combinatorial Theory. Series A},
    VOLUME = {174},
      YEAR = {2020},
     PAGES = {105190, 25},
      ISSN = {0097-3165,1096-0899},
   MRCLASS = {11B75 (15A69)},
  MRNUMBER = {4078997},
MRREVIEWER = {Donald\ Jason\ Gibson},
       DOI = {10.1016/j.jcta.2019.105190},
       URL = {https://doi.org/10.1016/j.jcta.2019.105190},
}

@inproceedings{naslund2017upper,
  title={Upper bounds for sunflower-free sets},
  author={Naslund, Eric and Sawin, Will},
  booktitle={Forum of Mathematics, Sigma},
  volume={5},
  pages={e15},
  year={2017},
  organization={Cambridge University Press}
}

@article{kazhdan2024schmidt,
  title={Schmidt rank and singularities},
  author={Kazhdan, David and Lampert, Amichai and Polishchuk, Alexander},
  journal={Ukrainian Mathematical Journal},
  volume={75},
  number={9},
  pages={1420--1442},
  year={2024},
  publisher={Springer}
}

@article{milicevic2019polynomial,
  title={Polynomial bound for partition rank in terms of analytic rank},
  author={Mili{\'c}evi{\'c}, Luka},
  journal={Geometric and Functional Analysis},
  volume={29},
  number={5},
  pages={1503--1530},
  year={2019},
  publisher={Springer}
}

@article{janzer2019polynomial,
  title={Polynomial bound for the partition rank vs the analytic rank of tensors},
  author={Janzer, Oliver},
  journal={Discrete Analysis},
  year={2019}
}

@article{gowers2011linear,
  title={Linear forms and higher-degree uniformity for functions on {$\mathbb{F}_p^n$}},
  author={Gowers, William T and Wolf, Julia},
  journal={Geometric and Functional Analysis},
  volume={21},
  number={1},
  pages={36--69},
  year={2011},
  publisher={Springer}
}

@article{schmidt1985density,
  title={The density of integer points on homogeneous varieties},
  author={Schmidt, Wolfgang M},
  journal={Acta Mathematica},
  year={1985}
}

@inproceedings{cohen2021structure,
  title={Structure vs. randomness for bilinear maps},
  author={Cohen, Alex and Moshkovitz, Guy},
  booktitle={Proceedings of the 53rd Annual ACM SIGACT Symposium on Theory of Computing},
  pages={800--808},
  year={2021}
}

@misc{tao2016slice,
  author = {Tao, Terence},
  title = {Notes on the slice rank of tensors},
  howpublished = {What's New},
  month = {August},
  year = {2016},
  url = {https://terrytao.wordpress.com/2016/08/24/notes-on-the-slice-rank-of-tensors/},
}

@book{eisenbud20163264,
  title={3264 and all that: A second course in algebraic geometry},
  author={Eisenbud, David and Harris, Joe},
  year={2016},
  publisher={Cambridge University Press}
}

@incollection{Grothendieck1958Torsion,
  author       = {Grothendieck, Alexander},
  title        = {Torsion homologique et sections rationnelles},
  booktitle    = {Séminaire Claude Chevalley},
  volume       = {3},
  number       = {5},
  pages        = {1--29},
  year         = {1958},
  publisher    = {Secrétariat mathématique, Paris},
  series       = {Anneaux de Chow et applications},
  language     = {French},
  zbl          = {0098.13101},
  url          = {http://www.numdam.org/item/SCC_1958__3__A5_0/}
}

@misc{Mackall2018Chow,
  author       = {Mackall, Eoin},
  title        = {The Chow ring of the special linear group and {K}-(co)homology},
  year         = {2018},
  month        = jan,
  day          = {12},
  howpublished = {Blog post},
  url          = {https://eoinmackall.wordpress.com/2018/01/12/the-chow-ring-of-the-special-linear-group-and-k-cohomology/},
}

@book{Procesi2007LieGroups,
  author    = {Procesi, Claudio},
  title     = {Lie Groups: An Approach through Invariants and Representations},
  series    = {Universitext},
  publisher = {Springer},
  address   = {New York, NY},
  year      = {2007},
  edition   = {1},
  pages     = {xxii, 596},
  isbn      = {978-0-387-26040-2},
  doi       = {10.1007/978-0-387-28929-8},
  language  = {English},
  subject   = {Lie groups, Invariants, Representations of algebras}
}

@book{Milne2017Algebraic,
  author    = {Milne, J. S.},
  title     = {Algebraic Groups: The Theory of Group Schemes of Finite Type over a Field},
  publisher = {Cambridge University Press},
  year      = {2017},
  address   = {Cambridge},
  isbn      = {9781107167483},
  pages     = {660},
  language  = {English}
}

@article{Fulton1992Flags,
  author    = {Fulton, William},
  title     = {Flags, {S}chubert polynomials, degeneracy loci, and determinantal formulas},
  journal   = {Duke Mathematical Journal},
  volume    = {65},
  number    = {3},
  pages     = {381--420},
  year      = {1992},
  doi       = {10.1215/S0012-7094-92-06516-1},
  mrnumber  = {1154177},
  issn      = {0012-7094},
  zbl       = {0788.14044}
}

@misc{stacks-project,
  author       = {The {Stacks project authors}},
  title        = {Lemma 01CA: Pullback commutes with tensor product},
  howpublished = {\url{https://stacks.math.columbia.edu/tag/01CA}},
  year         = {2026},
  note         = {Tag 01CA; Chapter 17, Lemma 16.4},
}

@article{tevelev2001isotropic,
  title     = {Isotropic Subspaces of Polylinear Forms},
  author    = {Tevelev, Evgeny A.},
  journal   = {Mathematical Notes},
  volume    = {69},
  number    = {6},
  pages     = {845--852},
  year      = {2001},
  publisher = {Kluwer Academic Publishers / Plenum Publishers},
  doi       = {10.1023/A:1010294818389},
  issn      = {0001-4346}
}

@article{draisma2019topological,
  title     = {Topological {N}oetherianity of polynomial functors},
  author    = {Draisma, Jan},
  journal   = {Journal of the American Mathematical Society},
  volume    = {32},
  number    = {3},
  pages     = {691--707},
  year      = {2019},
  publisher = {American Mathematical Society},
  doi       = {10.1090/jams/923},
  issn      = {0894-0347}
}

@article{baker2001difference,
  title     = {The Difference Between Consecutive Primes, {II}},
  author    = {Baker, R. C. and Harman, G. and Pintz, J.},
  journal   = {Proceedings of the London Mathematical Society},
  volume    = {83},
  number    = {3},
  pages     = {532--562},
  year      = {2001}
}

@inproceedings{lampert2026determinant,
  author = {Lampert, Amichai and Moshkovitz, Guy},
  title = {Slice Rank and Partition Rank of the Determinant},
  booktitle = {17th Innovations in Theoretical Computer Science Conference (ITCS 2026)},
  pages = {90:1--90:15},
  series = {Leibniz International Proceedings in Informatics (LIPIcs)},
  volume = {362},
  publisher = {Schloss Dagstuhl -- Leibniz-Zentrum f{\"u}r Informatik},
  address = {Dagstuhl, Germany},
  year = {2026},
  doi = {10.4230/LIPIcs.ITCS.2026.90}
}

@article{comon2008symmetric,
  author = {Comon, Pierre and Golub, Gene and Lim, Lek-Heng and Mourrain, Bernard},
  title = {Symmetric Tensors and Symmetric Tensor Rank},
  journal = {SIAM Journal on Matrix Analysis and Applications},
  volume = {30},
  number = {3},
  pages = {1254--1279},
  year = {2008},
  doi = {10.1137/060661569}
}

@article{chenye2026geometry,
    title={Geometry of multilinear varieties over infinite fields and its applications},
    author={Qiyuan Chen and Ke Ye},
    journal={arXiv preprint},
    volume={arXiv:2605.04859},
    year={2026},
    url={https://arxiv.org/abs/2605.04859}
}

@article{wangseigal2023lower,
    title={Lower bounds on the rank and symmetric rank of real tensors},
    author={Kexin Wang and Anna Seigal},
    journal={Journal of Symbolic Computation},
    volume={118},
    pages={69--92},
    year={2023},
    month={9},
    doi={10.1016/j.jsc.2023.01.004},
    url={https://www.sciencedirect.com/science/article/pii/S0747717123000046}
}

@misc{moshkovitzzhu_private,
  author = {Guy Moshkovitz and Daniel Zhu},
  title = {Private communication: The analytic rank and geometric rank of matrix multiplication tensors differ by an additive constant},
  howpublished = {Private conversation},
  year = {2024},
  note = {Personal discussion with the author, unpublished}
}
\end{document}